\documentclass[11pt]{amsart}


\usepackage{
    amsmath,
    amsfonts,
    amssymb,
    amsthm,
    amscd,
    comment,
    enumitem,
    etoolbox,
    gensymb,    
    mathtools,
    mathdots,
    stmaryrd
}
\usepackage[usenames,dvipsnames]{xcolor}
\usepackage[all]{xy}


\usepackage[T1]{fontenc}
\usepackage{bbm}                     
\usepackage[colorlinks=true, linkcolor=blue, citecolor=blue, urlcolor=blue, breaklinks=true]{hyperref}


\DeclareFontFamily{OT1}{pzc}{}
\DeclareFontShape{OT1}{pzc}{m}{it}{<-> s * [1.10] pzcmi7t}{}
\DeclareMathAlphabet{\mathpzc}{OT1}{pzc}{m}{it}


\leftmargin=0in
\topmargin=0pt
\headheight=0pt
\oddsidemargin=0in
\evensidemargin=0in
\textheight=8.75in
\textwidth=6.5in
\parindent=0.5cm
\headsep=0.25in
\widowpenalty10000
\clubpenalty10000


\usepackage[capitalize]{cleveref}   

\crefname{defin}{Definition}{Definitions}
\crefname{eg}{Example}{Examples}
\crefname{egs}{Example}{Examples}
\crefname{lem}{Lemma}{Lemmas}
\crefname{theo}{Theorem}{Theorems}
\crefname{equation}{}{}
\crefname{enumi}{}{}


\newcommand\N{\mathbb{N}}

\newcommand\Z{\mathbb{Z}}
\newcommand\kk{\Bbbk}
\newcommand\one{\mathbbm{1}}

\newcommand\B{\mathbf{B}}

\newcommand\cA{\mathcal{A}}
\newcommand\cB{\mathcal{B}}
\newcommand\cC{\mathcal{C}}
\newcommand\cD{\mathcal{D}}

\newcommand\fg{\mathfrak{g}}
\newcommand\gl{\mathfrak{gl}}          

\newcommand\Cl{\mathrm{Cl}}

\newcommand\rmd{\textup{mod-}}
\newcommand\smd{\textup{smod-}}

\newcommand\even{{\bar{0}}}
\newcommand\odd{{\bar{1}}}
\newcommand\dA{d_A}                     
\newcommand\cocenter[1]{
    \dot{#1}
}


\newcommand\AOB{\mathpzc{AOB}}          
\newcommand\cEnd{\mathpzc{End}}
\newcommand\Heis{\mathpzc{Heis}}        
\newcommand\OB{\mathpzc{OB}}            
\newcommand\SVec{\mathpzc{SVec}}        


\DeclareMathOperator{\End}{End}

\DeclareMathOperator{\Hom}{Hom}


\DeclareMathOperator{\Mat}{Mat}

\DeclareMathOperator{\sdim}{sdim}   

\DeclareMathOperator{\str}{str}     
\DeclareMathOperator{\Sym}{Sym}

\DeclareMathOperator{\tr}{tr}


\usepackage{tikz}
\usetikzlibrary{arrows.meta}
\usetikzlibrary{decorations.markings}
\usetikzlibrary{calc}


\tikzset{anchorbase/.style={>=To,baseline={([yshift=-0.5ex]current bounding box.center)}}}
\tikzset{ 
    centerzero/.style={>=To,baseline={([yshift=-0.5ex](#1))}},
    centerzero/.default={0,0}
}
\tikzset{wipe/.style={white,line width=3pt}}


\newcommand\braidup{to[out=up,in=down]}
\newcommand\braiddown{to[out=down,in=up]}


\newcommand\dotlabel[1]{$\scriptstyle{#1}$}
\newcommand\token[3]{
    \filldraw[blue] (#1) circle (1.5pt) node[anchor=#2] {\dotlabel{#3}}
}
\newcommand\singdot[1]{
    \filldraw[fill=white, draw=red] (#1) circle (1.5pt)
}
\newcommand\multdot[3]{
    \filldraw[fill=white, draw=red] (#1) circle (1.5pt) node[anchor=#2] {{\color{red} \dotlabel{#3}}}
}
\newcommand\teleport[2]{
    \draw (#1) -- (#2);
    \filldraw[blue] (#1) circle (1.5pt);
    \filldraw[blue] (#2) circle (1.5pt);
}


\newcommand\bubrightblank[1]{
    \draw[->] (#1)++(0,0.2) arc(90:-270:0.2)
}
\newcommand\bubright[3]{
    \draw[->] (#1)++(0,0.2) arc(90:-270:0.2);
    \filldraw[blue] (#1)++(0.2,0) circle (1.5pt) node[anchor=west] {\dotlabel{#2}};
    \filldraw[fill=white, draw=red] (#1)++(-0.2,0) circle (1.5pt) node[anchor=east] {{\color{red} \dotlabel{#3}}}
}
\newcommand\bubleftblank[1]{
    \draw[->] (#1)++(0,0.2) arc(90:450:0.2)
}
\newcommand\bubleft[3]{
    \draw[->] (#1)++(0,0.2) arc(90:450:0.2);
    \filldraw[blue] (#1)++(-0.2,0) circle (1.5pt) node[anchor=east] {\dotlabel{#2}};
    \filldraw[fill=white, draw=red] (#1)++(0.2,0) circle (1.5pt) node[anchor=west] {{\color{red} \dotlabel{#3}}}
}


\newcommand\cbubble[2]{
    \begin{tikzpicture}[centerzero]
        \bubright{0,0}{#1}{#2};
    \end{tikzpicture}
}
\newcommand\ccbubble[2]{
    \begin{tikzpicture}[centerzero]
        \bubleft{0,0}{#1}{#2};
    \end{tikzpicture}
}
\newcommand\uptok[1][a]{
    \begin{tikzpicture}[centerzero]
        \draw[->] (0,-0.2) -- (0,0.2);
        \token{0,0}{west}{#1};
    \end{tikzpicture}
}

\newcommand\updot{
    \begin{tikzpicture}[centerzero]
        \draw[->] (0,-0.2) -- (0,0.2);
        \singdot{0,0};
    \end{tikzpicture}
}
\newcommand\upcross{
    \begin{tikzpicture}[centerzero]
        \draw[->] (0.2,-0.2) -- (-0.2,0.2);
        \draw[->] (-0.2,-0.2) -- (0.2,0.2);
    \end{tikzpicture}
}
\newcommand\downcross{
    \begin{tikzpicture}[centerzero]
        \draw[<-] (0.2,-0.2) -- (-0.2,0.2);
        \draw[<-] (-0.2,-0.2) -- (0.2,0.2);
    \end{tikzpicture}
}
\newcommand\rightcross{
    \begin{tikzpicture}[centerzero]
        \draw[->] (-0.2,-0.2) -- (0.2,0.2);
        \draw[<-] (0.2,-0.2) -- (-0.2,0.2);
    \end{tikzpicture}
}
\newcommand\leftcross{
    \begin{tikzpicture}[centerzero]
        \draw[<-] (-0.2,-0.2) -- (0.2,0.2);
        \draw[->] (0.2,-0.2) -- (-0.2,0.2);
    \end{tikzpicture}
}
\newcommand{\rightcup}{
    \begin{tikzpicture}[anchorbase]
        \draw[->] (-0.15,0.15) -- (-0.15,0) arc(180:360:0.15) -- (0.15,0.15);
    \end{tikzpicture}
}
\newcommand{\leftcup}{
    \begin{tikzpicture}[anchorbase]
        \draw[<-] (-0.15,0.15) -- (-0.15,0) arc(180:360:0.15) -- (0.15,0.15);
    \end{tikzpicture}
}
\newcommand{\rightcap}{
    \begin{tikzpicture}[anchorbase]
        \draw[->] (-0.15,-0.15) -- (-0.15,0) arc(180:0:0.15) -- (0.15,-0.15);
    \end{tikzpicture}
}
\newcommand{\leftcap}{
    \begin{tikzpicture}[anchorbase]
        \draw[<-] (-0.15,-0.15) -- (-0.15,0) arc(180:0:0.15) -- (0.15,-0.15);
    \end{tikzpicture}
}


\newtheorem{theo}{Theorem}[section]

\newtheorem{prop}[theo]{Proposition}
\newtheorem{lem}[theo]{Lemma}

\theoremstyle{definition}
\newtheorem{defin}[theo]{Definition}
\newtheorem{rem}[theo]{Remark}
\newtheorem{eg}[theo]{Example}

\numberwithin{equation}{section}
\allowdisplaybreaks

\setenumerate[1]{label=(\alph*)}          

\setcounter{tocdepth}{1}


\newtoggle{comments}
\newtoggle{details}
\newtoggle{detailsnote}


\iftoggle{comments}{%
  \newcommand{\acomments}[1]{
    \ \\
    {\color{red}
      \textbf{AS:} #1
    }
    \ \\
    }
  \newcommand{\alex}[1]{
    \ \\
    {\color{purple}
      \textbf{For Alex:} #1
    }
    \ \\
    }
}{%
  \newcommand{\acomments}[1]{}
  \newcommand{\alex}[1]{}
}

\iftoggle{details}{%
  \newcommand{\details}[1]{
      \ \\
      {\color{OliveGreen}
        \textbf{Details:} #1
      }
      \\
  }
}{%
  \newcommand{\details}[1]{}
}

\begin{document}

\title{Affine oriented Frobenius Brauer categories}

\author{Alexandra McSween}
\address[A.M.]{
  Department of Mathematics and Statistics \\
  University of Ottawa \\
  Ottawa, ON K1N 6N5, Canada
}
\email{amcsw087@uottawa.ca}

\author{Alistair Savage}
\address[A.S.]{
  Department of Mathematics and Statistics \\
  University of Ottawa \\
  Ottawa, ON K1N 6N5, Canada
}
\urladdr{\href{https://alistairsavage.ca}{alistairsavage.ca}, \textrm{\textit{ORCiD}:} \href{https://orcid.org/0000-0002-2859-0239}{orcid.org/0000-0002-2859-0239}}
\email{alistair.savage@uottawa.ca}

\begin{abstract}
    To any Frobenius superalgebra $A$ we associate an \emph{oriented Frobenius Brauer category} and an \emph{affine oriented Frobenius Brauer category}.  We define natural actions of these categories on categories of supermodules for general linear Lie superalgebras $\mathfrak{gl}_{m|n}(A)$ with entries in $A$.  These actions generalize those on module categories for general linear Lie superalgebras and queer Lie superalgebras, which correspond to the cases where $A$ is the ground field and the two-dimensional Clifford superalgebra, respectively.
\end{abstract}

\subjclass[2020]{18M05, 18M30, 17B10}

\keywords{Monoidal category, supercategory, Lie superalgebra, Brauer algebra}

\ifboolexpr{togl{comments} or togl{details}}{%
  {\color{magenta}DETAILS OR COMMENTS ON}
}{%
}

\maketitle
\thispagestyle{empty}


\section{Introduction}

The oriented Brauer category $\OB$ is the free linear rigid symmetric monoidal category generated by a single object $\uparrow$.  This universal property immediately implies the existence of a monoidal functor
\[
    F \colon \OB \to \rmd\gl_n
\]
from $\OB$ to the category of right modules for the general linear Lie algebra $\gl_n$.  (One can also work with left modules, but right modules turn out to be easier for the current paper; see \cref{vision}.)  This functor sends the generating object $\uparrow$ and its dual $\downarrow$ to the defining $\gl_n$-module $V$ and its dual $V^*$, respectively.  For $r \ge 1$, the endomorphism algebra $\End_{\OB}(\uparrow^{\otimes r})$ is the group algebra of the symmetric group on $r$ letters, and the algebra homomorphism
\[
    \End_\OB(\uparrow^{\otimes r}) \to \End_{\gl_n}(V^{\otimes r})
\]
induced by $F$ is the classical one appearing in Schur--Weyl duality.  More generally, $\End_\OB(\uparrow^{\otimes r} \otimes \downarrow^{\otimes s})$ are \emph{walled Brauer algebras} and the induced algebra homomorphisms
\[
    \End_\OB(\uparrow^{\otimes r} \otimes \downarrow^{\otimes s}) \to \End_{\gl_n}(V^{\otimes r} \otimes (V^*)^{\otimes s})
\]
were originally defined and studied by Turaev \cite{Tur89} and Koike \cite{Koi89}.

The rank $n$ of $\gl_n$ appears as a parameter in $\OB$.  In fact, the definition of $\OB$ makes sense for \emph{any} value of this parameter, i.e.\ it need not be a positive integer.  This observation leads to the definition of Deligne's interpolating category for the general linear Lie groups \cite{Del07}; this interpolating category is the additive Karoubi envelope of $\OB$.

The functor $F$ yields an action of $\OB$ on $\rmd\gl_n$.  More precisely, for a category $\cC$, let $\cEnd(\cC)$ denote the corresponding strict monoidal category of endofunctors and natural transformations.  Then we have a functor
\[
    \OB \to \cEnd(\rmd\gl_n),\quad
    X \mapsto F(X) \otimes -,\quad
    f \mapsto F(f) \otimes -,
\]
for objects $X$ and morphisms $f$ in $\OB$.  In \cite{BCNR17}, this was extended to an action of the \emph{affine oriented Brauer category} $\AOB$ on $\rmd\gl_n$.  The category $\AOB$ is obtained from $\OB$ by adjoining an additional endomorphism of the generating object $\uparrow$, subject to certain natural relations.  This additional endomorphism acts by a natural transformation of the functor $V \otimes -$ arising from multiplication by a certain canonical element of $\gl_n \otimes \gl_n$.  Restricting to endomorphism spaces recovers actions of \emph{affine walled Brauer algebras} studied in \cite{RS15,Sar14}.

In fact, much of the above picture can be generalized, replacing $\gl_n$ by the general linear Lie \emph{super}algebra $\gl_{m|n}$.  Remarkably, one does not need to modify $\OB$ or $\AOB$ at all.  Here the corresponding Schur--Weyl duality was established by Sergeev \cite{Ser84} and Berele--Regev \cite{BR87}, while the action of the walled Brauer algebras was described in \cite[Prop.~3.2]{LSM02} and \cite[Th.~7.8]{BS12}.  The analogue of the functor $F$ above is described in \cite[\S8.3]{CW12} and \cite[Th.~4.16]{ES16}.  The extension of the action to $\AOB$ does not seem to have appeared in the literature, but is certainly expected by experts.  For example, it is mentioned in the introduction to \cite{BCNR17}.

The affine oriented Brauer category $\AOB$ is a special case of more general category.  The \emph{Heisenberg category} at central charge $-1$ was first introduced by Khovanov \cite{Kho14} as a tool to study the representation theory of the symmetric group.  In \cite{MS18}, it was generalized to arbitrary negative central charge, which corresponds to replacing the symmetric group by more general degenerate cyclotomic Hecke algebras of type $A$.  In \cite{Bru18}, Brundan gave a simplified presentation of the Heisenberg category $\Heis_k$ at arbitrary central charge $k$.  When $k=0$, the Heisenberg category is precisely the affine oriented Brauer category.

The Heisenberg category has been further generalized in \cite{RS17,Sav19,BSW-foundations} to the \emph{Frobenius Heisenberg category} $\Heis_k(A)$ depending on a Frobenius superalgebra $A$.  When $A=\kk$, this construction recovers the Heisenberg category.  When $k \ne 0$, the category $\Heis_k(A)$ acts naturally on categories of modules over the cyclotomic wreath product algebras defined in \cite{Sav20}.  However, actions in the case $k=0$ have not yet been studied.  In some sense, central charge zero yields the simplest and most interesting case.  For example, $\Heis_k(A)$ is symmetric monoidal if and only if $k=0$.

The goal of the current paper is to fill in this gap in the literature.  In analogy with the $A=\kk$ case, we call the Frobenius Heisenberg category at central charge zero the \emph{affine oriented Frobenius Brauer category} $\AOB(A)$.  It contains a natural Frobenius algebra analogue of the oriented Brauer category, which we call the \emph{oriented Frobenius Brauer category} $\OB(A)$.  We define, in \cref{tiger,lion}, natural superfunctors
\[
    \OB(A) \to \smd\gl_{m|n}(A),\quad
    \AOB(A) \to \cEnd(\smd\gl_{m|n}(A)),
\]
where $\smd\gl_{m|n}(A)$ denotes the monoidal supercategory of right supermodules for the general linear Lie superalgebra with entries in the Frobenius superalgebra $A$.

When $A=\kk$, we recover the functors described above for $\gl_{m|n} = \gl_{m|n}(\kk)$.  On the other hand, if $A=\Cl$ is the two-dimensional Clifford superalgebra, then $\gl_{m|n}(\Cl)$ is isomorphic to the queer Lie superalgebra $\mathfrak{q}_{m+n}(\kk)$, and our superfunctors recover those defined in \cite{BCK19}.  As in the $A=\kk$ case, these superfunctors extend Schur--Weyl duality results for queer Lie superalgebras \cite{Ser84b}, actions of walled Brauer--Clifford superalgebras \cite{JK14}, and actions of affine walled Brauer--Clifford superalgebras \cite{BCK19,GRSS19}.  In fact, the Clifford superalgebra is the main example of interest where the Frobenius superalgebra is not symmetric.  Since this case has already been studied in the aforementioned papers, we assume throughout the current document that $A$ is symmetric, as this simplifies the exposition.  We have indicated in \cref{glass} the modification that needs to be made to handle the more general case.

The results of the current paper extend the powerful category theoretic tools that have been used to study the representation theory of general linear Lie superalgebras and queer Lie superalgebras to the setting of general linear Lie superalgebras over Frobenius superalgebras.  For example, in \cref{shark}, we see that these superfunctors yield central elements in the universal enveloping algebra generalizing the known generators of this center in the $A=\kk$ and $A=\Cl$ cases.    When $A=\kk[x]/(x^l)$, then $\gl_{m|n}(A)$ is a truncated current superalgebra (also called a Takiff algebra when $m=0$ or $n=0$).  In this case, Brauer category type methods do not seem to have appeared in the literature before.

The results of the current paper lead naturally to several avenues of further research.  We conclude this introduction by listing a few such directions here.
\begin{enumerate}
    \item \emph{Interpolating categories}.  The idempotent completion of $\OB(A)$ is a natural candidate for an interpolating category for $\smd\gl_{m|n}(A)$, which could potentially be used to generalize work of Deligne and others in the case $n=0$, $A=\kk$.

    \item \emph{Frobenius Schur algebras}.  One should be able to define Schur algebras depending on a Frobenius superalgebra $A$ such that, when $A=\kk$, one recovers the usual Schur algebras.

    \item \emph{Cyclotomic quotients}.  In the cases $A=\kk$ and $A=\Cl$, cyclotomic quotients of $\AOB(A)$ have been studied in \cite{BCNR17,BCK19}.  We expect that many of these results can be extended to the setting of general Frobenius superalgebras.

    \item \emph{Quantum analogues}.  The \emph{quantum Frobenius Heisenberg categories}, introduced in \cite{BSW-qFrobheis}, are natural quantum analogues of Frobenius Heisenberg categories.  The special case of central charge zero yields a natural \emph{quantum affine oriented Frobenius Brauer category}.  When $A=\kk$, this is the affine HOMFLY-PT skein category.  Then quantum affine oriented Frobenius Brauer categories should act on as-yet-to-be-defined quantum enveloping algebras of $\gl_{m|n}(A)$ and yield Frobenius analogues of the HOMFLY-PT link invariant.
\end{enumerate}

\subsection*{Acknowledgements}

This research was supported by Discovery Grant RGPIN-2017-03854 from the Natural Sciences and Engineering Research Council of Canada (NSERC).

\section{Monoidal supercategories}

Throughout the paper, we fix a ground field $\kk$.  All vector spaces, algebras, categories, and
functors will be assumed to be linear over $\kk$ unless otherwise specified.  Unadorned tensor products denote tensor products over $\kk$.  Most things in the article will be enriched over the category $\SVec$ of \emph{vector superspaces}, that is, $\Z/2\Z$-graded vector spaces $V = V_\even \oplus V_\odd$ with parity-preserving morphisms.  Writing $\bar v \in \Z/2\Z$ for the parity of a homogeneous vector $v \in V$, the category $\SVec$ is a symmetric monoidal category with symmetric braiding $V \otimes W \rightarrow W\otimes V$ defined by $v\otimes w \mapsto (-1)^{\bar v \bar w} w \otimes v$ for homogeneous $v,w$, and extended by linearity.

For superalgebras $A=A_\even \oplus A_\odd$ and $B = B_\even\oplus
B_\odd$, multiplication in the superalgebra $A \otimes B$ is defined
by \begin{equation}
(a' \otimes b) (a \otimes b') = (-1)^{\bar a \bar b} a'a \otimes
bb'
\end{equation}
for homogeneous $a,a' \in A$, $b,b' \in B$.  The \emph{center} $Z(A)$ is the subalgebra generated by all homogeneous $a \in A$ such that
\begin{equation}\label{center}
    ab = (-1)^{\bar a \bar b}ba
\end{equation}
for all homogeneous $b \in A$.  The \emph{cocenter} $C(A)$ is the quotient of $A$ by the subspace spanned by $ab-(-1)^{\bar a \bar b}ba$ for all homogeneous $a,b \in A$.  Note that, a priori, $C(A)$ is only a vector superspace, and not a superalgebra.

Throughout this document, we will be working with \emph{strict monoidal supercategories} in the sense of \cite{BE17}.  A \emph{supercategory} is a category enriched in $\SVec$.  Thus, its morphism spaces are superspaces and composition is parity preserving.  A \emph{superfunctor} between supercategories induces a parity-preserving linear map between morphism superspaces.  For superfunctors $F,G \colon \cA\rightarrow\cB$, a \emph{supernatural transformation} $\alpha \colon F \Rightarrow G$ of \emph{parity $r\in\Z/2\Z$} is the data of morphisms $\alpha_X\in \Hom_{\cB}(FX, GX)_r$ for each $X\in\cA$ such that $Gf \circ \alpha_X = (-1)^{r \bar f}\alpha_Y\circ Ff$ for each homogeneous $f \in \Hom_{\cA}(X, Y)$.  A \emph{supernatural transformation} $\alpha \colon F \Rightarrow G$ is $\alpha = \alpha_\even + \alpha_\odd$ with each $\alpha_r$ being a supernatural transformation of parity $r$.

In a strict monoidal supercategory, the \emph{super interchange law} is
\begin{equation}\label{interchange}
    (f' \otimes g) \circ (f \otimes g')
    = (-1)^{\bar f \bar g} (f' \circ f) \otimes (g \circ g').
\end{equation}
We denote the unit object by $\one$ and the identity morphism of an object $X$ by $1_X$.  We will use the usual calculus of string diagrams, representing the horizontal composition $f \otimes g$ (resp.\ vertical composition $f \circ g$) of morphisms $f$ and $g$ diagrammatically by drawing $f$ to the left of $g$ (resp.\ drawing $f$ above $g$).  Care is needed with horizontal levels in such diagrams due to
the signs arising from the super interchange law:
\begin{equation}\label{intlaw}
    \begin{tikzpicture}[anchorbase]
        \draw (-0.5,-0.5) -- (-0.5,0.5);
        \draw (0.5,-0.5) -- (0.5,0.5);
        \filldraw[fill=white,draw=black] (-0.5,0.15) circle (5pt);
        \filldraw[fill=white,draw=black] (0.5,-0.15) circle (5pt);
        \node at (-0.5,0.15) {$\scriptstyle{f}$};
        \node at (0.5,-0.15) {$\scriptstyle{g}$};
    \end{tikzpicture}
    \quad=\quad
    \begin{tikzpicture}[anchorbase]
        \draw (-0.5,-0.5) -- (-0.5,0.5);
        \draw (0.5,-0.5) -- (0.5,0.5);
        \filldraw[fill=white,draw=black] (-0.5,0) circle (5pt);
        \filldraw[fill=white,draw=black] (0.5,0) circle (5pt);
        \node at (-0.5,0) {$\scriptstyle{f}$};
        \node at (0.5,0) {$\scriptstyle{g}$};
    \end{tikzpicture}
    \quad=\quad
    (-1)^{\bar f\bar g}\
    \begin{tikzpicture}[anchorbase]
        \draw (-0.5,-0.5) -- (-0.5,0.5);
        \draw (0.5,-0.5) -- (0.5,0.5);
        \filldraw[fill=white,draw=black] (-0.5,-0.15) circle (5pt);
        \filldraw[fill=white,draw=black] (0.5,0.15) circle (5pt);
        \node at (-0.5,-0.15) {$\scriptstyle{f}$};
        \node at (0.5,0.15) {$\scriptstyle{g}$};
    \end{tikzpicture}
    \ .
\end{equation}
We refer the reader to \cite{Sav-exp} for a brief overview of string diagrams suited to the current paper, to \cite[Ch.~1, 2]{TV17} for a more in-depth treatment, and to \cite{BE17} for a detailed discussion of signs in the super setting.

\section{General linear Lie superalgebras over Frobenius superalgebras}

Throughout this document $A$ will denote a symmetric Frobenius superalgebra with parity-preserving trace map $\tr \colon A \to \kk$.  Thus
\begin{equation} \label{yield}
    \tr(ab) = (-1)^{\bar{a}\bar{b}} \tr(ba)
    = (-1)^{\bar{a}} \tr(ba)
    = (-1)^{\bar{b}} \tr(ba),
    \quad a,b \in A,
\end{equation}
where the second and third equalities follow from the fact that $\tr(ab)=0$ unless $\bar{a}=\bar{b}$.  The definition of a Frobenius superalgebra gives that $A$ possesses a homogeneous basis $\B_A$ and a left dual basis $\{b^\vee : b \in \B_A\}$ such that
\begin{equation}
    \tr(b^\vee c) = \delta_{b,c},\quad b,c \in \B_A.
\end{equation}
It follows that, for all $a \in A$, we have
\begin{equation} \label{adecomp}
    a
    = \sum_{b \in \B_A} \tr(b^\vee a)b
    = \sum_{b \in \B_A} \tr(ab) b^\vee.
\end{equation}
Note that $\bar{b} = \overline{b^\vee}$, and that the left dual basis to $\{b^\vee : b \in \B_A\}$ is given by
\begin{equation} \label{doubledual}
    (b^\vee)^\vee = (-1)^{\bar{b}} b.
\end{equation}

\begin{rem} \label{marvel}
    More generally, a (not necessarily symmetric) Frobenius superalgebra is an associative superalgebra with a trace map admitting dual bases, but where we do not require the trace map to satisfy \cref{yield}.  One can show that there is an automorphism $\varphi$ of $A$, called the \emph{Nakayama automorphism}, such that $\tr(ab) = (-1)^{\bar{a}\bar{b}} \tr(b \varphi(a))$.  More generally, one can also allow the trace map to be parity reversing.
\end{rem}

\begin{lem}
    For all homogeneous $a,c \in A$, we have
    \begin{equation} \label{beam}
        \sum_{b \in \B_A} (-1)^{\bar{b}\bar{c}} abc \otimes b^\vee
        = (-1)^{\bar{a}\bar{c}} \sum_{b \in \B_A} (-1)^{\bar{b}\bar{c}} b \otimes cb^\vee a.
    \end{equation}
\end{lem}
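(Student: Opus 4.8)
The plan is to establish \cref{beam} by expanding the dual-basis sums using the completeness relation \cref{adecomp} together with the trace identities \cref{yield}. The key observation is that a sum of the form $\sum_{b \in \B_A} f(b) \otimes g(b^\vee)$, when one of the two tensor factors contains a linear expression in $b$ or $b^\vee$, can be ``reassociated'' by inserting the resolution of the identity. Concretely, I would write $abc = \sum_{d \in \B_A} \tr(d^\vee abc)\, d$ on the left-hand side, so that the left side becomes $\sum_{b,d} (-1)^{\bar b \bar c} \tr(d^\vee abc)\, d \otimes b^\vee$. The scalar $\tr(d^\vee abc)$ can then be manipulated via the (anti)symmetry of the trace to relocate the factor $a$ (and absorb the sign $(-1)^{\bar a \bar c}$), after which the sum over $b$ is recollected into a single element of $A$ occupying the second tensor factor. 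Tracking the parities of $b$, $b^\vee$ (recall $\bar b = \overline{b^\vee}$ by the remark after \cref{adecomp}), $a$, $c$, and $d$ carefully should convert the left side into the right side.

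More explicitly, the first step is to use \cref{adecomp} in the form $a = \sum_{b} \tr(ab) b^\vee$ applied to an appropriate argument, or equivalently to contract two of the dual-basis sums against each other. I expect the cleanest route is: start from the right-hand side $\sum_b (-1)^{\bar a \bar c}(-1)^{\bar b \bar c}\, b \otimes c b^\vee a$, write $cb^\vee a = \sum_d \tr(d^\vee c b^\vee a)\, d$, so this becomes $\sum_{b,d} (-1)^{\bar a \bar c + \bar b \bar c} \tr(d^\vee c b^\vee a)\, b \otimes d$. Now by cyclicity \cref{yield}, $\tr(d^\vee c b^\vee a) = \pm \tr(b^\vee a d^\vee c)$ with a controllable sign, and then $\sum_b \tr(b^\vee \cdot)\, b = (\,\cdot\,)$ collapses the $b$-sum to give $\sum_d \pm\, (a d^\vee c) \otimes d$; relabelling $d \leftrightarrow b$ and using \cref{doubledual} to pass between $d$ and $d^\vee$ should land on the left-hand side.

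The main obstacle is purely bookkeeping: getting every sign right. There are several sources of signs --- the cyclicity relation \cref{yield} (which contributes $(-1)^{\bar a}$ or $(-1)^{\bar b}$ depending on which form is used), the relation $(b^\vee)^\vee = (-1)^{\bar b} b$ from \cref{doubledual}, the parity constraint forcing $\tr(xy) = 0$ unless $\bar x = \bar y$ (which is what lets one simplify $(-1)^{\bar a \bar b}$ to $(-1)^{\bar a}$ inside a trace), and the sign $(-1)^{\bar b \bar c}$ already present in the statement. A useful sanity check is to specialize to $A = \kk$ (all parities zero), where \cref{beam} reduces to the tautology $\sum_b abc \otimes b^\vee = \sum_b b \otimes cb^\vee a$, itself a standard consequence of the dual-basis identity; another is to set $c = 1$, which should reduce to a known ``slide'' relation for the Casimir-type element $\sum_b ab \otimes b^\vee$. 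Once the unsigned skeleton of the computation is in place, I would insert the signs by tracking the parity of each symbol through each application of \cref{yield,doubledual,adecomp}, using the fact that all nonzero terms satisfy the relevant parity matching to collapse products of parities into single ones.
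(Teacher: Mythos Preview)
Your proposal is correct and follows essentially the same route as the paper: expand one tensor factor via \cref{adecomp} to introduce a second dual-basis sum, use the symmetry \cref{yield} (together with the parity constraint on nonzero traces) to cycle the trace, and then collapse the remaining sum via \cref{adecomp} again. The paper starts from the left-hand side and uses the auxiliary variable $e$ where you write $d$, but the manipulations are identical.
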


\begin{proof}
    We have
    \begin{multline*}
        \sum_{b \in \B_A} (-1)^{\bar{b}\bar{c}} abc \otimes b^\vee
        \overset{\cref{adecomp}}{=} \sum_{b,e \in \B_A} (-1)^{\bar{b}\bar{c}} \tr(e^\vee abc)e \otimes b^\vee
        = \sum_{b,e \in \B_A} (-1)^{\bar{c}\bar{e} + \bar{c}\bar{a}} e \otimes \tr(ce^\vee ab)b^\vee
        \\
        \overset{\cref{adecomp}}{=} (-1)^{\bar{a}\bar{c}} \sum_{e \in \B_A} (-1)^{\bar{c}\bar{e}} e \otimes ce^\vee a,
    \end{multline*}
    where, in the second equality, we used the fact that $\tr(e^\vee abc)=0$ unless $\bar{e}+\bar{a}+\bar{b}+\bar{c}=0$ to simplify the exponent of $-1$.
\end{proof}

Fix $m,n \in \N$.  For $1 \le i \le m+n$, define $p(i) \in \Z/2\Z$ by
\begin{equation}
    p(i) =
    \begin{cases}
        \bar{0} & \text{if } 1 \le i \le m, \\
        \bar{1} & \text{if } m+1 \le i \le m+n.
    \end{cases}
\end{equation}
Let $\Mat_{m|n}(A)$ be the associative superalgebra consisting of $(m+n) \times (m+n)$ matrices with entries in $A$, where multiplication is given by matrix multiplication and the $\Z/2\Z$-grading is defined as follows.  For $a \in A$ and $1 \le i,j \le m+n$, let $a_{(i,j)} \in \Mat_{m|n}(A)$ denote the matrix with $a$ in the $(i,j)$ position and $0$ in all other positions.  Then, for homogeneous $a \in A$, we define
\[
    \overline{a_{(i,j)}} = \bar{a} + p(i) + p(j).
\]
It is straightforward to verify that $\Mat_{m|n}(A)$ is a symmetric Frobenius superalgebra, with trace
\[
    \tr_{m|n} := \tr \circ \str \colon \Mat_{m|n}(A) \to \kk,
\]
where $\str$ is the usual supertrace.  Thus, for $1 \le i,j \le m+n$ and $a \in A$, we have
\[
    \tr_{m|n}(a_{(i,j)}) = \delta_{i,j} (-1)^{p(i)} \tr(a).
\]
\alex{You should verify the above statements in your thesis.}

Let $\fg = \gl_{m|n}(A)$ denote the Lie superalgebra associated to $\Mat_{m|n}(A)$.  Precisely, $\fg$ is equal to $\Mat_{m|n}(A)$ as a $\kk$-supermodule and the Lie superbracket is defined by
\[
    [M,N] = MN - (-1)^{\bar{M}\bar{N}} NM
\]
for homogeneous $M,N \in \fg$ and extended by linearity.  We have that
\begin{equation}
    \B_\fg := \{b_{(i,j)} : b \in \B_A,\ 1 \le i,j \le m+n\}
\end{equation}
is a basis for $\fg$ with left dual basis (with respect to $\tr_{m|n}$) given by
\begin{equation}
    (b_{(i,j)})^\vee = (-1)^{p(j)} b^\vee_{(j,i)}.
\end{equation}
Note that, here and in what follows, we adopt the convention that we apply the symbol $\vee$ before considering subscripts.  Thus, for example, $b^\vee_{(i,j)} = (b^\vee)_{(i,j)}$.

\begin{eg} \label{gleg}
    When $A = \kk$, then $\gl_{m|n}(\kk)$ is the usual general linear superalgebra over $\kk$.
\end{eg}

\begin{eg}
   If $A = \kk[t]/(t^l)$, $l \ge 2$, then $\gl_{m|n}(A)$ is a \emph{truncated current superalgebra}.  When $n=0$, this is also known as a \emph{Takiff algebra}.
\end{eg}

\begin{eg} \label{mirror}
    Let $\Cl$ denote the two-dimensional Clifford superalgebra generated by an odd element $c$ satisfying $c^2=1$.  Recall that the \emph{queer Lie superalgebra} $\mathfrak{q}_n(\kk)$ has even and odd parts
    \[
        \mathfrak{q}_n(\kk)_{\bar{0}} =
        \left\{
            \begin{pmatrix}
                M & 0 \\
                0 & M
            \end{pmatrix}
            : M \in \Mat_n(\kk)
        \right\},
        \quad
        \mathfrak{q}_n(\kk)_{\bar{1}} =
        \left\{
            \begin{pmatrix}
                0 & M \\
                M & 0
            \end{pmatrix}
            : M \in \Mat_n(\kk)
        \right\},
    \]
    where $\Mat_n(\kk)$ denotes the set of $n \times n$ matrices with entries in $\kk$.  Then it is straightforward to verify that we have an isomorphism of Lie superalgebras $\mathfrak{q}_{m+n}(\kk) \xrightarrow{\cong} \gl_{m|n}(\Cl)$ given by
    \[
        \begin{pmatrix}
            1_{(i,j)} & 0 \\
            0 & 1_{(i,j)}
        \end{pmatrix}
        \mapsto c^{p(i)+p(j)} 1_{(i,j)},\quad
        \begin{pmatrix}
            0 & 1_{(i,j)} \\
            1_{(i,j)} & 0
        \end{pmatrix}
        \mapsto c^{p(i)+p(j)+1} 1_{(i,j)},\quad
        1 \le i,j \le m+n.
    \]
    \details{
        Let $\vartheta \colon \mathfrak{q}_{m+n}(\kk) \xrightarrow{\cong} \gl_{m|n}(\Cl)$ denote the given map.  Define
        \[
            E_{i,j} :=
            \begin{pmatrix}
                1_{(i,j)} & 0 \\
                0 & 1_{(i,j)}
            \end{pmatrix}
            ,\quad
            E'_{i,j} :=
            \begin{pmatrix}
                0 & 1_{(i,j)} \\
                1_{(i,j)} & 0
            \end{pmatrix}.
        \]
        The map $\vartheta$ is clearly an isomorphism of vector superspaces.  For $1 \le i,j \le n$, we have
        \begin{multline*}
            [\vartheta(E_{i,j}), \vartheta(E_{k,l})]
            = [c^{p(i)+p(j)} 1_{(i,j)}, c^{p(k)+p(l)} 1_{(k,l)}]
            = \delta_{j,k} c^{p(i)+p(l)} 1_{(i,l)} - \delta_{i,l} c^{p(j)+p(k)} 1_{(k,j)}
            \\
            = \vartheta \left( \delta_{j,k} E_{i,l} - \delta_{i,l} E_{k,j} \right)
            = \vartheta \left( [E_{i,j}, E_{k,l}] \right)
        \end{multline*}
        and
        \begin{multline*}
            [\vartheta(E'_{i,j}), \vartheta(E'_{k,l})]
            = [c^{p(i)+p(j)+1} 1_{(i,j)}, c^{p(k)+p(l)+1} 1_{(k,l)}]
            = \delta_{j,k} c^{p(i)+p(l)} 1_{(i,l)} + \delta_{i,l} c^{p(j)+p(k)} 1_{(k,j)}
            \\
            = \vartheta \left( \delta_{j,k} E_{i,l} + \delta_{i,l} E_{k,j} \right)
            = \vartheta \left( [E'_{i,j}, E'_{k,l}] \right)
        \end{multline*}
        and
        \begin{multline*}
            [\vartheta(E_{i,j}), \vartheta(E'_{k,l})]
            = [c^{p(i)+p(j)} 1_{(i,j)}, c^{p(k)+p(l)+1} 1_{(k,l)}]
            = \delta_{j,k} c^{p(i)+p(l)+1} 1_{(i,l)} - \delta_{i,l} c^{p(j)+p(k)+1} 1_{(k,j)}
            \\
            = \vartheta \left( \delta_{j,k} E'_{i,l} - \delta_{i,l} E'_{k,j} \right)
            = \vartheta \left( [E_{i,j}, E'_{k,l}] \right).
        \end{multline*}
    }
    Note that $\Cl$ is not actually a \emph{symmetric} Frobenius superalgebra in the sense discussed above (where we required the trace map to be parity preserving).  However it is a Frobenius superalgebra; see \cref{marvel}.  Up to a scalar multiple, there are two choices of homogeneous trace map, one parity preserving and one parity reversing.  Under the parity-reversing trace map, $\Cl$ is symmetric, while under the parity-preserving trace map it has nontrivial Nakayama automorphism given by $\varphi(c)=-c$.
\end{eg}

Let $A^{m|n}$ denote the $\kk$-supermodule equal to $A^{m+n}$ as a $\kk$-module, with $\Z/2\Z$-grading determined by
\[
    \overline{a e_i} = \bar{a} + p(i),\quad
    a \in A,\ 1 \le i \le m+n,
\]
where $e_i$ denotes the element of $A^{m|n}$ with a $1$ in the $i$-th entry and all other entries equal to $0$.  We will also consider $A^{m|n}$ as a left $A$-supermodule with action
\[
    a(a_1,\dotsc,a_{m+n}) = (a a_1,\dotsc, a a_{m+n}).
\]

Let $\smd\fg$ denote the category of right $\fg$-supermodules.  Let $V_+ = A^{m|n}$, written as row matrices, and let $V_- = A^{m|n}$, written as column matrices.  Then $V_+$ is naturally a right $\fg$-supermodule with action given by right matrix multiplication, while $V_-$ is a right $\fg$-supermodule with action
\begin{equation}
    v \cdot M := -(-1)^{\bar{v}\bar{M}} Mv,\quad
    v \in V_-,\ M \in \fg.
\end{equation}
\details{
    For $v \in V_-$ and $M,N \in \fg$, we have
    \begin{align*}
        v \cdot [M,N]
        &= (v \cdot M) \cdot N - (-1)^{\bar{M}\bar{N}} (v \cdot N) \cdot M \\
        &= (-1)^{\bar{v}(\bar{M} + \bar{N}) + \bar{M} \bar{N}} NMv - (-1)^{\bar{v}(\bar{M} + \bar{N})} MNv \\
        &= -(-1)^{\bar{v}(\bar{M} + \bar{N})} [M,N]v \\
        &= v \cdot [M,N].
    \end{align*}
}
We define the $\kk$-bilinear form
\begin{equation}
    B \colon V_- \times V_+ \to \kk,\quad
    B(v, w) = (-1)^{\bar{v}\bar{w}} \tr(wv).
\end{equation}
We sometimes view $B$ as a $\kk$-linear map $V_- \otimes V_+ \to \kk$ in the natural way.  Then it is straightforward to verify that $B$ is a homomorphism of right $\fg$-supermodules.
\details{
    For $v,w \in V$ and $M \in \fg$, we have
    \[
        B((v \otimes w)M)
        = B \left( v \otimes wM -(-1)^{\bar{M}(\bar{v} + \bar{w})} Mv \otimes w \right)
        = (-1)^{\bar{v}(\bar{w}+\bar{M})} (\tr(wMv) -\tr(wMv))
        = 0. \qedhere
    \]
}

\begin{rem} \label{vision}
    When working with commuting actions of superalgebras, it is most natural to work with one left action and one right action, as this avoids signs arising from the actions of the two algebras commuting past one another.  Since we will want the category introduced in \cref{sec:category} to act on the left, we choose to work with \emph{right} $\fg$-supermodules.  The reader who wishes to work with left $\fg$-supermodules instead can use the standard equivalence between the categories of right and left supermodules.  Precisely, if $V$ is a right $\fg$-supermodule, then it becomes a left $\fg$-module with action given by $X \cdot v := -(-1)^{\bar{X} \bar{v}} vX$, $X \in \fg$, $v \in V$.
\end{rem}

For $a \in A$ and $1 \le i \le m+n$, let $a_{i,\pm}$ denote the element of $V_\pm$ with $a$ in the $i$-th position and $0$ in every other position.  Then
\begin{equation}
    \B_+ := \{b_{i,+} : b \in \B_A,\ 1 \le i \le m+n\}
    \qquad \text{and} \qquad
    \B_- := \{b^\vee_{i,-} : b \in \B_A,\ 1 \le i \le m+n\}
\end{equation}
are dual bases of $V_+$ and $V_-$ with respect to the bilinear form $B$.  Define
\[
    v^\vee = (-1)^{p(i)} b^\vee_{i,-} \in \B_-
    \quad \text{for }
    v = b_{i,+} \in \B_+.
\]
Thus, we have $B(v^\vee,w) = \delta_{v,w}$ for $v,w \in \B_+$.

Define
\begin{equation} \label{Omegadef}
    \Omega := \sum_{M \in \B_\fg} M \otimes M^\vee \in \fg \otimes \fg,\qquad
    \tau := \sum_{b \in \B_A} b \otimes b^\vee \in A \otimes A.
\end{equation}
The following result will be crucial in our computations of the categorical action of the affine oriented Frobenius Heisenberg category.

\begin{lem}
    For all $u,v \in V_+$, we have
    \begin{equation} \label{kitkat}
        \tau (u \otimes v) = (-1)^{\bar{u}\bar{v}} (v \otimes u) \Omega.
    \end{equation}
\end{lem}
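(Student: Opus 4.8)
Here is my plan for proving \cref{kitkat}.

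The plan is to expand both sides explicitly and match them using the dual basis identities in \cref{adecomp}. Recall that $V_+$ is a left $A$-supermodule, so $A \otimes A$ acts on $V_+ \otimes V_+$, and that $\g$, hence $\g \otimes \g$, acts on $V_+ \otimes V_+$ on the right; unwinding the super-interchange signs coming from the tensored actions gives
\[
    \tau(u \otimes v) = \sum_{b \in \B_A} (-1)^{\bar b \bar u}\, bu \otimes b^\vee v,
    \qquad
    (v \otimes u)\Omega = \sum_{M \in \B_\g} (-1)^{\bar u \bar M}\, (v \cdot M) \otimes (u \cdot M^\vee).
\]
By $\kk$-bilinearity it suffices to prove \cref{kitkat} for homogeneous elements $u = a_{k,+}$ and $v = a'_{l,+}$, where $a, a' \in A$ are homogeneous and $1 \le k, l \le m+n$.

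Next I would collapse the sum over $\B_\g$ on the right-hand side. Writing a general element of $\B_\g$ as $M = c_{(i,j)}$ with $c \in \B_A$, using $M^\vee = (-1)^{p(j)} c^\vee_{(j,i)}$ and the formula for right matrix multiplication of a row vector, $a_{k,+} \cdot c_{(i,j)} = \delta_{k,i}(ac)_{j,+}$, one finds that the summand $(v \cdot M) \otimes (u \cdot M^\vee)$ vanishes unless $i = l$ and $j = k$. Thus the sum over $\B_\g$ collapses to a single sum over $c \in \B_A$ with $M = c_{(l,k)}$, and, recording the signs produced by $\bar M = \bar c + p(l) + p(k)$ and by $M^\vee$, the right-hand side of \cref{kitkat} becomes an explicit power of $-1$ (depending only on $\bar a, \bar{a'}, p(k), p(l)$) times $\sum_{c \in \B_A} (-1)^{\bar c(\bar a + p(k))}(a'c)_{k,+} \otimes (ac^\vee)_{l,+}$. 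On the left-hand side, the left action $b \cdot a_{k,+} = (ba)_{k,+}$ gives $\tau(u \otimes v) = \sum_{b \in \B_A}(-1)^{\bar b(\bar a + p(k))}(ba)_{k,+} \otimes (b^\vee a')_{l,+}$.

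Since both sides now live in the copies of $A$ sitting in positions $k$ and $l$, the claim reduces to an identity in $A \otimes A$: writing $q = \bar a + p(k)$, the expressions $\sum_{b \in \B_A}(-1)^{\bar b q}\, ba \otimes b^\vee a'$ and $\sum_{c \in \B_A}(-1)^{\bar c q}\, a'c \otimes ac^\vee$ must agree up to an explicit scalar. I would establish this exactly as \cref{beam} is established: apply \cref{adecomp} to the first tensor factor to rewrite each expression as a double sum over $\B_A$, use the cyclicity \cref{yield} of $\tr$ to move the remaining basis vector into the middle slot of a trace, and then sum it out with a second use of \cref{adecomp}; both expressions then collapse to $\sum_{e \in \B_A}(-1)^{\bar e q}\, e \otimes ae^\vee a'$, up to signs. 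The only real work — and the main obstacle — is the sign bookkeeping: one must carry the parities $\bar a, \bar{a'}, p(k), p(l)$ through the interchange signs, the two applications of \cref{adecomp}, and the cyclicity of $\tr$, and then verify that, once everything is expressed in terms of $\sum_{e \in \B_A}(-1)^{\bar e q}\, e \otimes ae^\vee a'$, the total scalar in front is the same on both sides of \cref{kitkat}. A short computation modulo $2$ (using $p(k)^2 = p(k)$ and $\bar a^2 = \bar a$, together with $\bar u \bar v = (\bar a + p(k))(\bar{a'} + p(l))$) shows that this scalar is $(-1)^{\bar a\, p(k)}$ on each side, completing the proof.
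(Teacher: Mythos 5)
Your proposal is correct and follows essentially the same route as the paper's proof: reduce to basis vectors $a_{k,+}$ and $a'_{l,+}$, expand $\Omega$ over $\B_\g$ so that the matrix-unit action collapses the sum to a single sum over $\B_A$, and then match the two sides using \cref{adecomp} and the cyclicity \cref{yield} of the trace. The only difference is organizational --- you isolate the residual identity in $A \otimes A$ and observe it is proved in the same way as \cref{beam}, whereas the paper carries out the identical manipulation inline with elements of $V_+$ --- and your final scalar $(-1)^{\bar{a}\,p(k)}$ on both sides checks out.
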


\begin{proof}
    It suffices to prove the result for $u=a_{k,+}$ and $v=c_{l,+}$, where $a,c \in A$ and $1 \le k,l \le m+n$.  We have
    \begin{align*}
        (a_{k,+} \otimes c_{l,+}) \Omega
        &= \sum_{i,j=1}^{m+n} \sum_{b \in \B_A} (-1)^{(\bar{b} + p(i) + p(j))(\bar{c} + p(l))+p(j)} a_{k,+} b_{(i,j)} \otimes c_{l,+} b^\vee_{(j,i)} \\
        &= \sum_{b \in \B_\fg} (-1)^{(\bar{b} + p(k))(\bar{c} + p(l)) + \bar{c}p(l)} (ab)_{l,+} \otimes (cb^\vee)_{k,+} \\
        &\overset{\mathclap{\cref{adecomp}}}{=}\ \sum_{b,e \in \B_A} (-1)^{(\bar{b} + p(k))(\bar{c} + p(l)) + \bar{c}p(l)} \tr(e^\vee ab)e_{l,+} \otimes (cb^\vee)_{k,+} \\
        &\overset{\mathclap{\cref{adecomp}}}{=}\ (-1)^{(\bar{a}+p(k))(\bar{c}+p(l))} \sum_{e \in \B_A} (-1)^{\bar{e}(\bar{c} + p(l)) + \bar{c}p(l)} e_{l,+} \otimes (c e^\vee a)_{k,+} \\
        &\overset{\mathclap{\cref{adecomp}}}{=}\ (-1)^{(\bar{a}+p(k))(\bar{c}+p(l))} \sum_{b,e \in \B_A} (-1)^{\bar{e}(\bar{c} + p(l)) + \bar{c}p(l)} e_{l,+} \otimes \tr(c e^\vee b)(b^\vee a)_{k,+} \\
        &\overset{\mathclap{\cref{yield}}}{=}\ (-1)^{(\bar{a}+p(k))(\bar{c}+p(l))} \sum_{b,e \in \B_A} (-1)^{(\bar{e}+\bar{c})(\bar{c} + p(l))} \tr(e^\vee bc) e_{l,+} \otimes (b^\vee a)_{k,+} \\
        &\overset{\mathclap{\cref{adecomp}}}{=}\ (-1)^{(\bar{a}+p(k))(\bar{c}+p(l))} \sum_{b \in \B_A} (-1)^{\bar{b}(\bar{c} + p(l))} (bc)_{l,+} \otimes (b^\vee a)_{k,+} \\
        &= (-1)^{(\bar{a}+p(k))(\bar{c}+p(l))} \tau (c_{l,+} \otimes a_{k,+}). \qedhere
    \end{align*}
\end{proof}

\section{Affine oriented Frobenius Brauer categories\label{sec:category}}

We introduce here our main object of study, the affine oriented Frobenius Brauer category.  This is the central charge $k=0$ case of the Frobenius Heisenberg category $\Heis_k(A)$ introduced in \cite{Sav19} and further studied in \cite{BSW-foundations}.

\begin{defin}
    The \emph{oriented Frobenius Brauer category} $\OB(A)$ associated to the symmetric Frobenius superalgebra $A$ is the strict monoidal supercategory generated by objects $\uparrow$ and $\downarrow$ and morphisms
    \begin{gather*}
        \upcross \colon \uparrow \otimes \uparrow\ \to\ \uparrow \otimes \uparrow
        \ ,\quad
        \uptok \colon \uparrow\ \to\ \uparrow
        \ ,\ a \in A,
        \\
        \rightcup \colon \one \to\ \downarrow \otimes \uparrow
        , \quad
        \rightcap \colon \uparrow \otimes \downarrow\ \to \one
        , \quad
        \leftcup \colon \one \to\ \uparrow \otimes \downarrow
        , \quad
        \leftcap \colon \downarrow \otimes \uparrow\ \to \one,
    \end{gather*}
    subject to the relations
    \begin{gather} \label{toklin}
        \begin{tikzpicture}[centerzero]
            \draw[->] (0,-0.35) -- (0,0.35);
            \token{0,0}{west}{1};
        \end{tikzpicture}
        =
        \begin{tikzpicture}[centerzero]
            \draw[->] (0,-0.35) -- (0,0.35);
        \end{tikzpicture}
        ,\quad
        \lambda\
        \begin{tikzpicture}[centerzero]
            \draw[->] (0,-0.35) -- (0,0.35);
            \token{0,0}{west}{a};
        \end{tikzpicture}
        + \mu\
        \begin{tikzpicture}[centerzero]
            \draw[->] (0,-0.35) -- (0,0.35);
            \token{0,0}{west}{b};
        \end{tikzpicture}
        =
        \begin{tikzpicture}[centerzero]
            \draw[->] (0,-0.35) -- (0,0.35);
            \token{0,0}{west}{\lambda a + \mu b};
        \end{tikzpicture}
        ,\quad
        \begin{tikzpicture}[centerzero]
            \draw[->] (0,-0.35) -- (0,0.35);
            \token{0,-0.15}{east}{b};
            \token{0,0.15}{east}{a};
        \end{tikzpicture}
        =
        \begin{tikzpicture}[centerzero]
            \draw[->] (0,-0.35) -- (0,0.35);
            \token{0,0}{west}{ab};
        \end{tikzpicture}
        \ ,
        \\ \label{wreath}
        \begin{tikzpicture}[centerzero]
            \draw[->] (0.2,-0.4) to[out=135,in=down] (-0.15,0) to[out=up,in=-135] (0.2,0.4);
            \draw[->] (-0.2,-0.4) to[out=45,in=down] (0.15,0) to[out=up,in=-45] (-0.2,0.4);
        \end{tikzpicture}
        =
        \begin{tikzpicture}[centerzero]
            \draw[->] (-0.15,-0.4) -- (-0.15,0.4);
            \draw[->] (0.15,-0.4) -- (0.15,0.4);
        \end{tikzpicture}
        \ ,\quad
        \begin{tikzpicture}[centerzero]
            \draw[->] (0.3,-0.4) -- (-0.3,0.4);
            \draw[->] (0,-0.4) to[out=135,in=down] (-0.25,0) to[out=up,in=-135] (0,0.4);
            \draw[->] (-0.3,-0.4) -- (0.3,0.4);
        \end{tikzpicture}
        =
        \begin{tikzpicture}[centerzero]
            \draw[->] (0.3,-0.4) -- (-0.3,0.4);
            \draw[->] (0,-0.4) to[out=45,in=down] (0.25,0) to[out=up,in=-45] (0,0.4);
            \draw[->] (-0.3,-0.4) -- (0.3,0.4);
        \end{tikzpicture}
        \ ,\quad
        \begin{tikzpicture}[centerzero]
            \draw[->] (0.3,-0.4) -- (-0.3,0.4);
            \draw[->] (-0.3,-0.4) -- (0.3,0.4);
            \token{-0.15,-0.2}{east}{a};
        \end{tikzpicture}
        =
        \begin{tikzpicture}[centerzero]
            \draw[->] (0.3,-0.4) -- (-0.3,0.4);
            \draw[->] (-0.3,-0.4) -- (0.3,0.4);
            \token{0.15,0.2}{west}{a};
        \end{tikzpicture}
        \ ,
        \\ \label{inversion}
        \begin{tikzpicture}[centerzero]
            \draw[<-] (0.2,-0.4) to[out=135,in=down] (-0.15,0) to[out=up,in=-135] (0.2,0.4);
            \draw[->] (-0.2,-0.4) to[out=45,in=down] (0.15,0) to[out=up,in=-45] (-0.2,0.4);
        \end{tikzpicture}
        =
        \begin{tikzpicture}[centerzero]
            \draw[<-] (-0.15,-0.4) -- (-0.15,0.4);
            \draw[->] (0.15,-0.4) -- (0.15,0.4);
        \end{tikzpicture}
        \ ,\quad
        \begin{tikzpicture}[centerzero]
            \draw[->] (0.2,-0.4) to[out=135,in=down] (-0.15,0) to[out=up,in=-135] (0.2,0.4);
            \draw[<-] (-0.2,-0.4) to[out=45,in=down] (0.15,0) to[out=up,in=-45] (-0.2,0.4);
        \end{tikzpicture}
        =
        \begin{tikzpicture}[centerzero]
            \draw[->] (-0.15,-0.4) -- (-0.15,0.4);
            \draw[<-] (0.15,-0.4) -- (0.15,0.4);
        \end{tikzpicture}
        \ ,\quad
        \begin{tikzpicture}[centerzero]
            \draw[->] (0,-0.4) to[out=up,in=0] (-0.25,0.15) to[out=180,in=up] (-0.4,0) to[out=down,in=180] (-0.25,-0.15) to[out=0,in=down] (0,0.4);
        \end{tikzpicture}
        =
        \begin{tikzpicture}[centerzero]
            \draw[->] (0,-0.4) -- (0,0.4);
        \end{tikzpicture}
        =
        \begin{tikzpicture}[centerzero]
            \draw[->] (0,-0.4) to[out=up,in=180] (0.25,0.15) to[out=0,in=up] (0.4,0) to[out=down,in=0] (0.25,-0.15) to[out=180,in=down] (0,0.4);
        \end{tikzpicture}
        \ ,
        \\ \label{rightadj}
        \begin{tikzpicture}[centerzero]
            \draw[->] (-0.3,-0.4) -- (-0.3,0) arc(180:0:0.15) arc(180:360:0.15) -- (0.3,0.4);
        \end{tikzpicture}
        =
        \begin{tikzpicture}[centerzero]
            \draw[->] (0,-0.4) -- (0,0.4);
        \end{tikzpicture}
        \ ,\qquad
        \begin{tikzpicture}[centerzero]
            \draw[->] (-0.3,0.4) -- (-0.3,0) arc(180:360:0.15) arc(180:0:0.15) -- (0.3,-0.4);
        \end{tikzpicture}
        =
        \begin{tikzpicture}[centerzero]
            \draw[<-] (0,-0.4) -- (0,0.4);
        \end{tikzpicture}
        \ ,
    \end{gather}
    for all $a,b \in A$ and $\lambda,\mu \in \kk$.  In the above, the left and right crossings are defined by
    \begin{equation} \label{windmill}
        \rightcross
        :=
        \begin{tikzpicture}[centerzero]
            \draw[->] (0.2,-0.3) \braidup (-0.2,0.3);
            \draw[->] (-0.4,0.3) -- (-0.4,0.1) to[out=down,in=left] (-0.2,-0.2) to[out=right,in=left] (0.2,0.2) to[out=right,in=up] (0.4,-0.1) -- (0.4,-0.3);
        \end{tikzpicture}
        \ ,\qquad
        \leftcross
        \ :=\
        \begin{tikzpicture}[centerzero]
            \draw[->] (-0.2,-0.3) \braidup (0.2,0.3);
            \draw[->] (0.4,0.3) -- (0.4,0.1) to[out=down,in=right] (0.2,-0.2) to[out=left,in=right] (-0.2,0.2) to[out=left,in=up] (-0.4,-0.1) -- (-0.4,-0.3);
        \end{tikzpicture}
        \ .
    \end{equation}
    The parity of $\uptok$ is $\bar{a}$, and all the other generating morphisms are even.
\end{defin}

\begin{rem}
    One can define $\OB(A)$ equivalently as the strict monoidal supercategory generated by the morphisms
    \[
        \upcross,\quad \leftcross,\quad \rightcup,\quad \rightcap,\quad \uptok,\ a \in A,
    \]
    subject to the relations \cref{toklin,wreath,rightadj}, and the first two equalities in \cref{inversion}.  In this presentation, one \emph{defines} the left cup and cap by
    \[
        \leftcup
        =
        \begin{tikzpicture}[anchorbase]
            \draw[<-] (-0.15,0.3) to[out=-45,in=90] (0.15,0) arc(360:180:0.15) to[out=90,in=225] (0.15,0.3);
        \end{tikzpicture}
        \qquad \text{and} \qquad
        \leftcap
        =
        \begin{tikzpicture}[anchorbase]
            \draw[<-] (-0.15,-0.3) to[out=45,in=-90] (0.15,0) arc(0:180:0.15) to[out=-90,in=135] (0.15,-0.3);
        \end{tikzpicture}
        \ .
    \]
\end{rem}

We refer to the morphisms $\uptok$ as \emph{tokens}.  It follows from \cref{toklin,wreath} that the map
\[
    A \to \End_{\AOB(A)}(\uparrow),\quad a \mapsto \uptok,
\]
is a superalgebra homomorphism and, also using \cref{intlaw}, that
\[
    \begin{tikzpicture}[centerzero]
        \draw[->] (-0.2,-0.35) -- (-0.2,0.35);
        \token{-0.2,0.15}{east}{a};
        \draw[->] (0.2,-0.35) -- (0.2,0.35);
        \token{0.2,-0.15}{west}{b};
    \end{tikzpicture}
    =
    \begin{tikzpicture}[centerzero]
        \draw[->] (-0.2,-0.35) -- (-0.2,0.35);
        \token{-0.2,0}{east}{a};
        \draw[->] (0.2,-0.35) -- (0.2,0.35);
        \token{0.2,0}{west}{b};
    \end{tikzpicture}
    = (-1)^{\bar{a} \bar{b}}
    \begin{tikzpicture}[centerzero]
        \draw[->] (-0.2,-0.35) -- (-0.2,0.35);
        \token{-0.2,-0.15}{east}{a};
        \draw[->] (0.2,-0.35) -- (0.2,0.35);
        \token{0.2,0.15}{west}{b};
    \end{tikzpicture}
    \ ,\qquad
    \begin{tikzpicture}[centerzero]
        \draw[->] (-0.3,-0.3) -- (0.3,0.3);
        \draw[->] (0.3,-0.3) -- (-0.3,0.3);
        \token{0.15,-0.15}{west}{a};
    \end{tikzpicture}
    =
    \begin{tikzpicture}[centerzero]
        \draw[->] (-0.3,-0.3) -- (0.3,0.3);
        \draw[->] (0.3,-0.3) -- (-0.3,0.3);
        \token{-0.1,0.1}{east}{a};
    \end{tikzpicture}
    \ .
\]

Define the \emph{teleporter}
\begin{equation} \label{teleporter}
    \begin{tikzpicture}[anchorbase]
        \draw[->] (0,-0.4) --(0,0.4);
        \draw[->] (0.5,-0.4) -- (0.5,0.4);
        \teleport{0,0}{0.5,0};
    \end{tikzpicture}
    =
    \begin{tikzpicture}[anchorbase]
        \draw[->] (0,-0.4) --(0,0.4);
        \draw[->] (0.5,-0.4) -- (0.5,0.4);
        \teleport{0,0.2}{0.5,-0.2};
    \end{tikzpicture}
    =
    \begin{tikzpicture}[anchorbase]
        \draw[->] (0,-0.4) --(0,0.4);
        \draw[->] (0.5,-0.4) -- (0.5,0.4);
        \teleport{0,-0.2}{0.5,0.2};
    \end{tikzpicture}
    := \sum_{b \in \B_A}
    \begin{tikzpicture}[anchorbase]
        \draw[->] (0,-0.4) --(0,0.4);
        \draw[->] (0.5,-0.4) -- (0.5,0.4);
        \token{0,0.15}{east}{b};
        \token{0.5,-0.15}{west}{b^\vee};
    \end{tikzpicture}
    \overset{\cref{doubledual}}{=} \sum_{b \in \B_A}
    \begin{tikzpicture}[anchorbase]
        \draw[->] (0,-0.4) --(0,0.4);
        \draw[->] (0.5,-0.4) -- (0.5,0.4);
        \token{0,-0.15}{east}{b^\vee};
        \token{0.5,0.15}{west}{b};
    \end{tikzpicture}\ .
\end{equation}
We do not insist that the tokens in a teleporter \cref{teleporter} are drawn at the same horizontal level.  The convention when this is not the case is that $b$ is on the higher of the tokens and $b^\vee$ is on the lower one.  We will also draw teleporters in larger diagrams.  When doing so, we add a sign of $(-1)^{y \bar b}$ in front of the $b$ summand in \cref{teleporter}, where $y$ is the sum of the parities of all tokens in the diagram vertically between the tokens labeled $b$ and $b^\vee$.  For example,
\[
    \begin{tikzpicture}[anchorbase]
        \draw[->] (-1,-0.4) -- (-1,0.4);
        \draw[->] (-0.5,-0.4) -- (-0.5,0.4);
        \draw[->] (0,-0.4) -- (0,0.4);
        \draw[->] (0.5,-0.4) -- (0.5,0.4);
        \token{-1,0}{east}{a};
        \token{0,0.1}{west}{c};
        \teleport{-0.5,0.2}{0.5,-0.2};
    \end{tikzpicture}
    = \sum_{b \in \B_A} (-1)^{(\bar a + \bar c) \bar b}\
    \begin{tikzpicture}[anchorbase]
        \draw[->] (-1,-0.4) -- (-1,0.4);
        \draw[->] (-0.5,-0.4) -- (-0.5,0.4);
        \draw[->] (0,-0.4) -- (0,0.4);
        \draw[->] (0.5,-0.4) -- (0.5,0.4);
        \token{-1,0}{east}{a};
        \token{0,0.1}{west}{c};
        \token{-0.5,0.2}{east}{b};
        \token{0.5,-0.2}{west}{b^\vee};
    \end{tikzpicture}
    \ .
\]
This convention ensures that one can slide the endpoints of teleporters along strands:
\[
    \begin{tikzpicture}[anchorbase]
        \draw[->] (-1,-0.4) -- (-1,0.4);
        \draw[->] (-0.5,-0.4) -- (-0.5,0.4);
        \draw[->] (0,-0.4) -- (0,0.4);
        \draw[->] (0.5,-0.4) -- (0.5,0.4);
        \token{-1,0}{east}{a};
        \token{0,0.1}{west}{c};
        \teleport{-0.5,0.2}{0.5,-0.2};
    \end{tikzpicture}
    =
    \begin{tikzpicture}[anchorbase]
        \draw[->] (-1,-0.4) -- (-1,0.4);
        \draw[->] (-0.5,-0.4) -- (-0.5,0.4);
        \draw[->] (0,-0.4) -- (0,0.4);
        \draw[->] (0.5,-0.4) -- (0.5,0.4);
        \token{-1,0}{east}{a};
        \token{0,0.1}{east}{c};
        \teleport{-0.5,-0.2}{0.5,0.2};
    \end{tikzpicture}
    =
    \begin{tikzpicture}[anchorbase]
        \draw[->] (-1,-0.4) -- (-1,0.4);
        \draw[->] (-0.5,-0.4) -- (-0.5,0.4);
        \draw[->] (0,-0.4) -- (0,0.4);
        \draw[->] (0.5,-0.4) -- (0.5,0.4);
        \token{-1,0}{east}{a};
        \token{0,0.1}{west}{c};
        \teleport{-0.5,0.2}{0.5,0.2};
    \end{tikzpicture}
    =
    \begin{tikzpicture}[anchorbase]
        \draw[->] (-1,-0.4) -- (-1,0.4);
        \draw[->] (-0.5,-0.4) -- (-0.5,0.4);
        \draw[->] (0,-0.4) -- (0,0.4);
        \draw[->] (0.5,-0.4) -- (0.5,0.4);
        \token{-1,0}{east}{a};
        \token{0,0.1}{west}{c};
        \teleport{-0.5,-0.2}{0.5,-0.2};
    \end{tikzpicture}
    \ .
\]
It follows from \cref{beam} that tokens can ``teleport'' across teleporters in the sense that, for $a \in A$, we have
\begin{equation} \label{throw}
    \begin{tikzpicture}[anchorbase]
        \draw[->] (0,-0.5) --(0,0.5);
        \draw[->] (0.5,-0.5) -- (0.5,0.5);
        \token{0.5,-0.25}{west}{a};
        \teleport{0,0}{0.5,0};
    \end{tikzpicture}
    =
    \begin{tikzpicture}[anchorbase]
        \draw[->] (0,-0.5) --(0,0.5);
        \draw[->] (0.5,-0.5) -- (0.5,0.5);
        \token{0,0.25}{east}{a};
        \teleport{0,0}{0.5,0};
    \end{tikzpicture}
    \ ,\quad
    \begin{tikzpicture}[anchorbase]
        \draw[->] (0,-0.5) --(0,0.5);
        \draw[->] (0.5,-0.5) -- (0.5,0.5);
        \token{0,-0.25}{east}{a};
        \teleport{0,0}{0.5,0};
    \end{tikzpicture}
    =
    \begin{tikzpicture}[anchorbase]
        \draw[->] (0,-0.5) --(0,0.5);
        \draw[->] (0.5,-0.5) -- (0.5,0.5);
        \token{0.5,0.25}{west}{a};
        \teleport{0,0}{0.5,0};
    \end{tikzpicture}
    \ ,\quad
    \begin{tikzpicture}[anchorbase]
        \draw[->] (0,-0.5) --(0,0.5);
        \draw[<-] (0.5,-0.5) -- (0.5,0.5);
        \token{0,0.25}{east}{a};
        \teleport{0,0}{0.5,0};
    \end{tikzpicture}
    =
    \begin{tikzpicture}[anchorbase]
        \draw[->] (0,-0.5) --(0,0.5);
        \draw[<-] (0.5,-0.5) -- (0.5,0.5);
        \token{0.5,0.25}{west}{a};
        \teleport{0,0}{0.5,0};
    \end{tikzpicture}
    \ ,\quad
    \begin{tikzpicture}[anchorbase]
        \draw[<-] (0,-0.5) --(0,0.5);
        \draw[->] (0.5,-0.5) -- (0.5,0.5);
        \token{0,-0.25}{east}{a};
        \teleport{0,0}{0.5,0};
    \end{tikzpicture}
    =
    \begin{tikzpicture}[anchorbase]
        \draw[<-] (0,-0.5) --(0,0.5);
        \draw[->] (0.5,-0.5) -- (0.5,0.5);
        \token{0.5,-0.25}{west}{a};
        \teleport{0,0}{0.5,0};
    \end{tikzpicture}
    \ ,
\end{equation}
where the strings can occur anywhere in a diagram (i.e.\ they do not need to be adjacent).  The endpoints of teleporters slide through crossings and they can teleport too.  For example we have
\begin{equation} \label{laser}
    \begin{tikzpicture}[anchorbase]
        \draw[->] (-0.4,-0.4) to (0.4,0.4);
        \draw[->] (0.4,-0.4) to (-0.4,0.4);
        \draw[->] (0.8,-0.4) to (0.8,0.4);
        \teleport{-0.15,0.15}{0.8,0.15};
    \end{tikzpicture}
    =
    \begin{tikzpicture}[anchorbase]
        \draw[->] (-0.4,-0.4) to (0.4,0.4);
        \draw[->] (0.4,-0.4) to (-0.4,0.4);
        \draw[->] (0.8,-0.4) to (0.8,0.4);
        \teleport{0.2,-0.2}{0.8,-0.2};
    \end{tikzpicture}
    \ ,\qquad
    \begin{tikzpicture}[anchorbase]
        \draw[->] (-0.4,-0.5) to (-0.4,0.5);
        \draw[->] (0,-0.5) to (0,0.5);
        \draw[->] (0.4,-0.5) to (0.4,0.5);
        \teleport{-0.4,-0.25}{0,-0.25};
        \teleport{0,0}{0.4,0};
    \end{tikzpicture}
    =
    \begin{tikzpicture}[anchorbase]
        \draw[->] (-0.4,-0.5) to (-0.4,0.5);
        \draw[->] (0,-0.5) to (0,0.5);
        \draw[->] (0.4,-0.5) to (0.4,0.5);
        \teleport{-0.4,-0.2}{0.4,0.3};
        \teleport{0,-.1}{0.4,-.1};
    \end{tikzpicture}
    =
    \begin{tikzpicture}[anchorbase]
        \draw[->] (-0.4,-0.5) to (-0.4,0.5);
        \draw[->] (0,-0.5) to (0,0.5);
        \draw[->] (0.4,-0.5) to (0.4,0.5);
        \teleport{-0.4,0.15}{0.4,0.15};
        \teleport{0,-.1}{0.4,-.1};
    \end{tikzpicture}
    \ .
\end{equation}

\begin{defin}
    The \emph{affine oriented Frobenius Brauer category} $\AOB(A)$ associated to the Frobenius superalgebra $A$ is the strict monoidal supercategory obtained from $\OB(A)$ by adjoining an even generator $\updot \colon \uparrow \to \uparrow$, which we call a \emph{dot}, subject to the relations
    \begin{equation} \label{dotslide}
        \begin{tikzpicture}[centerzero]
            \draw[->] (-0.3,-0.3) -- (0.3,0.3);
            \draw[->] (0.3,-0.3) -- (-0.3,0.3);
            \singdot{-0.15,0.15};
        \end{tikzpicture}
        -
        \begin{tikzpicture}[centerzero]
            \draw[->] (-0.3,-0.3) -- (0.3,0.3);
            \draw[->] (0.3,-0.3) -- (-0.3,0.3);
            \singdot{0.15,-0.15};
        \end{tikzpicture}
        =
        \begin{tikzpicture}[centerzero]
            \draw[->] (-0.2,-0.3) -- (-0.2,0.3);
            \draw[->] (0.2,-0.3) -- (0.2,0.3);
            \teleport{-0.2,0}{0.2,0};
        \end{tikzpicture}
        \ ,\qquad
        \begin{tikzpicture}[centerzero]
            \draw[->] (0,-0.3) -- (0,0.3);
            \token{0,0.1}{east}{a};
            \singdot{0,-0.13};
        \end{tikzpicture}
        =
        \begin{tikzpicture}[centerzero]
            \draw[->] (0,-0.3) -- (0,0.3);
            \token{0,-0.13}{west}{a};
            \singdot{0,0.1};
        \end{tikzpicture}
        \ ,\quad a \in A.
    \end{equation}
\end{defin}

It follows from \cref{dotslide,wreath} that we also have
\begin{equation}
    \begin{tikzpicture}[centerzero]
        \draw[->] (-0.3,-0.3) -- (0.3,0.3);
        \draw[->] (0.3,-0.3) -- (-0.3,0.3);
        \singdot{-0.15,-0.15};
    \end{tikzpicture}
    -
    \begin{tikzpicture}[centerzero]
        \draw[->] (-0.3,-0.3) -- (0.3,0.3);
        \draw[->] (0.3,-0.3) -- (-0.3,0.3);
        \singdot{0.15,0.15};
    \end{tikzpicture}
    =
    \begin{tikzpicture}[centerzero]
        \draw[->] (-0.2,-0.3) -- (-0.2,0.3);
        \draw[->] (0.2,-0.3) -- (0.2,0.3);
        \teleport{-0.2,0}{0.2,0};
    \end{tikzpicture}
    \ .
\end{equation}
In addition, endpoints of teleporters slide through dots:
\begin{equation}\label{strawberry}
    \begin{tikzpicture}[anchorbase]
        \draw[->] (-0.2,-0.5) to (-0.2,0.5);
        \draw[->] (0.2,-0.5) to (0.2,0.5);
        \singdot{-0.2,0};
        \teleport{-0.2,-0.25}{0.2,0};
    \end{tikzpicture}
    =
    \begin{tikzpicture}[anchorbase]
        \draw[->] (-0.2,-0.5) to (-0.2,0.5);
        \draw[->] (0.2,-0.5) to (0.2,0.5);
        \singdot{-0.2,0};
        \teleport{-0.2,0.25}{0.2,0};
    \end{tikzpicture}
    \ .
\end{equation}

\begin{rem}[$\Z$-grading]
    If the symmetric Frobenius superalgebra $A$ is $\Z$-graded with the trace map $\tr$ of degree $-\dA$, then the categories $\OB(A)$ and $\AOB(A)$ are also naturally $\Z$-graded.  The $\Z$-degrees of the generating morphisms are as follows:
    \[
        \deg \left( \uptok \right) = \deg(a),\quad
        \deg \left( \updot \right) = \dA,\quad
        \deg \left( \upcross \right) = \deg \left( \rightcup \right) = \deg \left( \rightcap \right) = \deg \left( \leftcup \right) = \left( \leftcap \right) = 0.
    \]
    All of the results of the current paper can be carried out in the graded setting.
\end{rem}

\begin{eg} \label{road}
    As noted in the introduction, when $A=\kk$, the categories $\OB(\kk)$ and $\AOB(\kk)$ are the \emph{oriented Brauer} and \emph{affine oriented Brauer categories}, respectively; see \cite{BCNR17}.  The endomorphism algebras of $\OB(\kk)$ are \emph{oriented Brauer algebras}, which are isomorphic to \emph{walled Brauer algebras}.
\end{eg}

\begin{rem} \label{glass}
    The definitions of $\OB(A)$ and $\AOB(A)$ can be generalized to allow for $A$ to be a (not necessarily symmetric) Frobenius superalgebra with trace map of arbitrary parity.  The only change to the relations is that the parity of the dot is equal to the parity $\overline{\tr}$ of the trace map (i.e.\ the dot is odd if the trace map is parity reversing) and the second relation in \cref{dotslide} becomes
    \[
        \begin{tikzpicture}[centerzero]
            \draw[->] (0,-0.3) -- (0,0.3);
            \token{0,0.1}{east}{a};
            \singdot{0,-0.13};
        \end{tikzpicture}
        = (-1)^{\bar{a} \overline{\tr}}\
        \begin{tikzpicture}[centerzero]
            \draw[->] (0,-0.3) -- (0,0.3);
            \token{0,-0.13}{west}{\varphi(a)};
            \singdot{0,0.1};
        \end{tikzpicture}
        \ ,\quad a \in A,
    \]
    where $\varphi$ is the Nakayama automorphism.  (This level of generality was considered in \cite{Sav19}.)  With this modification, we can take $A$ to be the two-dimensional Clifford superalgebra $\Cl$; see \cref{mirror}.  Then $\OB(\Cl)$ and $\AOB(\Cl)$ are the oriented Brauer--Clifford and degenerate affine oriented Brauer--Clifford supercategories, respectively, introduced in \cite{BCK19}.
\end{rem}

We recall some other relations, which are proved in \cite[Th.~1.3]{Sav19}, that follow from the defining relations.  In what follows, the relations not involving dots hold in both $\OB(A)$ and $\AOB(A)$.  The relations \cref{rightadj} means that $\downarrow$ is right dual to $\uparrow$.  In fact, we also have
\begin{equation} \label{leftadj}
    \begin{tikzpicture}[centerzero]
        \draw[<-] (-0.3,-0.4) -- (-0.3,0) arc(180:0:0.15) arc(180:360:0.15) -- (0.3,0.4);
    \end{tikzpicture}
    \ =\
    \begin{tikzpicture}[centerzero]
        \draw[<-] (0,-0.4) -- (0,0.4);
    \end{tikzpicture}
    \ ,\qquad
    \begin{tikzpicture}[centerzero]
        \draw[<-] (-0.3,0.4) -- (-0.3,0) arc(180:360:0.15) arc(180:0:0.15) -- (0.3,-0.4);
    \end{tikzpicture}
    \ =\
    \begin{tikzpicture}[centerzero]
        \draw[->] (0,-0.4) -- (0,0.4);
    \end{tikzpicture}
    \ ,
\end{equation}
and so $\downarrow$ is also left dual to $\uparrow$.  Thus $\OB(A)$ and $\AOB(A)$ are \emph{rigid}.  Furthermore, we have that
\begin{equation} \label{stake}
    \downcross
    :=
    \begin{tikzpicture}[centerzero]
        \draw[<-] (0.2,-0.3) \braidup (-0.2,0.3);
        \draw[->] (-0.4,0.3) -- (-0.4,0.1) to[out=down,in=left] (-0.2,-0.2) to[out=right,in=left] (0.2,0.2) to[out=right,in=up] (0.4,-0.1) -- (0.4,-0.3);
    \end{tikzpicture}
    =
    \begin{tikzpicture}[centerzero]
        \draw[<-] (-0.2,-0.3) \braidup (0.2,0.3);
        \draw[->] (0.4,0.3) -- (0.4,0.1) to[out=down,in=right] (0.2,-0.2) to[out=left,in=right] (-0.2,0.2) to[out=left,in=up] (-0.4,-0.1) -- (-0.4,-0.3);
    \end{tikzpicture}
    \ ,\quad
    \begin{tikzpicture}[centerzero]
        \draw[<-] (0,-0.4) -- (0,0.4);
        \singdot{0,0};
    \end{tikzpicture}
    :=
    \begin{tikzpicture}[centerzero]
        \draw[->] (-0.3,0.4) -- (-0.3,-0.05) arc(180:360:0.15) -- (0,0.05) arc(180:0:0.15) -- (0.3,-0.4);
        \singdot{0,0};
    \end{tikzpicture}
    =
    \begin{tikzpicture}[centerzero]
        \draw[->] (0.3,0.4) -- (0.3,-0.05) arc(360:180:0.15) -- (0,0.05) arc(0:180:0.15) -- (-0.3,-0.4);
        \singdot{0,0};
    \end{tikzpicture}
    \ ,\quad
    \begin{tikzpicture}[centerzero]
        \draw[<-] (0,-0.4) -- (0,0.4);
        \token{0,0}{east}{a};
    \end{tikzpicture}
    :=
    \begin{tikzpicture}[centerzero]
        \draw[->] (-0.3,0.4) -- (-0.3,-0.05) arc(180:360:0.15) -- (0,0.05) arc(180:0:0.15) -- (0.3,-0.4);
        \token{0,0}{east}{a};
    \end{tikzpicture}
    =
    \begin{tikzpicture}[centerzero]
        \draw[->] (0.3,0.4) -- (0.3,-0.05) arc(360:180:0.15) -- (0,0.05) arc(0:180:0.15) -- (-0.3,-0.4);
        \token{0,0}{west}{a};
    \end{tikzpicture}
    \ ,\quad a \in A.
\end{equation}
These relations mean that dots, tokens, and crossings slide over all cups and caps in the sense that, for all orientations of the strands, we have
\begin{gather}
    \begin{tikzpicture}[anchorbase]
        \draw (-0.2,-0.2) -- (-0.2,0) arc (180:0:0.2) -- (0.2,-0.2);
        \token{-0.2,0}{east}{a};
    \end{tikzpicture}
    \ =\
    \begin{tikzpicture}[anchorbase]
        \draw (-0.2,-0.2) -- (-0.2,0) arc (180:0:0.2) -- (0.2,-0.2);
        \token{0.2,0}{west}{a};
    \end{tikzpicture}
    \ ,\quad
    \begin{tikzpicture}[anchorbase]
        \draw (-0.2,0.2) -- (-0.2,0) arc (180:360:0.2) -- (0.2,0.2);
        \token{-0.2,0}{east}{a};
    \end{tikzpicture}
    \ =\
    \begin{tikzpicture}[anchorbase]
        \draw (-0.2,0.2) -- (-0.2,0) arc (180:360:0.2) -- (0.2,0.2);
        \token{0.2,0}{west}{a};
    \end{tikzpicture}
    \ ,\quad
    \begin{tikzpicture}[anchorbase]
        \draw (-0.2,-0.2) -- (-0.2,0) arc (180:0:0.2) -- (0.2,-0.2);
        \singdot{-0.2,0};
    \end{tikzpicture}
    \ =\
    \begin{tikzpicture}[anchorbase]
        \draw (-0.2,-0.2) -- (-0.2,0) arc (180:0:0.2) -- (0.2,-0.2);
        \singdot{0.2,0};
    \end{tikzpicture}
    \ ,\quad
    \begin{tikzpicture}[anchorbase]
        \draw (-0.2,0.2) -- (-0.2,0) arc (180:360:0.2) -- (0.2,0.2);
        \singdot{-0.2,0};
    \end{tikzpicture}
    \ =\
    \begin{tikzpicture}[anchorbase]
        \draw (-0.2,0.2) -- (-0.2,0) arc (180:360:0.2) -- (0.2,0.2);
        \singdot{0.2,0};
    \end{tikzpicture}
    \ ,
    \\
    \begin{tikzpicture}[centerzero]
        \draw (-0.2,0.3) -- (-0.2,0.1) arc(180:360:0.2) -- (0.2,0.3);
        \draw (-0.3,-0.3) to[out=up,in=down] (0,0.3);
    \end{tikzpicture}
    =
    \begin{tikzpicture}[centerzero]
        \draw (-0.2,0.3) -- (-0.2,0.1) arc(180:360:0.2) -- (0.2,0.3);
        \draw (0.3,-0.3) to[out=up,in=down] (0,0.3);
    \end{tikzpicture}
    \ ,\quad
    \begin{tikzpicture}[centerzero]
        \draw (-0.2,-0.3) -- (-0.2,-0.1) arc(180:0:0.2) -- (0.2,-0.3);
        \draw (-0.3,0.3) \braiddown (0,-0.3);
    \end{tikzpicture}
    =
    \begin{tikzpicture}[centerzero]
        \draw (-0.2,-0.3) -- (-0.2,-0.1) arc(180:0:0.2) -- (0.2,-0.3);
        \draw (0.3,0.3) \braiddown (0,-0.3);
    \end{tikzpicture}
    \ .
\end{gather}
More precisely, the cups and caps equip $\OB(A)$ and $\AOB(A)$ with the structure of \emph{strict pivotal} supercategories; see \cite[(5.16)]{BSW-foundations}.  It follows from the definition of the tokens on downward strands that
\[
    \begin{tikzpicture}[centerzero]
        \draw[<-] (0,-0.35) -- (0,0.35);
        \token{0,-0.15}{east}{b};
        \token{0,0.15}{east}{a};
    \end{tikzpicture}
    = (-1)^{\bar{a}\bar{b}}\
    \begin{tikzpicture}[centerzero]
        \draw[<-] (0,-0.35) -- (0,0.35);
        \token{0,0}{west}{ba};
    \end{tikzpicture}
    \ .
\]

For $r \ge 0$, we define
\[
    \begin{tikzpicture}[centerzero]
        \draw[->] (0,-0.2) -- (0,0.2);
        \multdot{0,0}{east}{r};
    \end{tikzpicture}
    = \left( \updot \right)^{\circ r}.
\]
We adopt the following conventions for bubbles with a negative number of dots:
\begin{equation}
    \cbubble{a}{r} = - \delta_{r,-1} \tr(a),\quad
    \ccbubble{a}{r} = \delta_{r,-1} \tr(a) \quad \text{if } r < 0.
\end{equation}
For any homogeneous $a \in A$, we define
\begin{equation}\label{adag}
    a^\dagger := \sum_{b \in \B_A} (-1)^{\bar a \bar b} b a  b^\vee,
\end{equation}
which is independent of the choice of the basis $\B_A$.

Then we have the \emph{infinite Grassmannian relation}
\begin{equation} \label{infgrass}
    \sum_{r + s = n}
    \begin{tikzpicture}[centerzero]
        \bubleftblank{-0.4,0};
        \bubrightblank{0.4,0};
        \token{-0.6,0}{east}{a};
        \token{0.6,0}{west}{b};
        \multdot{-0.4,-0.2}{north}{r-1};
        \multdot{0.4,-0.2}{north}{s-1};
        \teleport{-0.2,0}{0.2,0};
    \end{tikzpicture}
    \ = - \delta_{n,0} \tr(ab) 1_\one,
\end{equation}
and the \emph{bubble slide relations}
\begin{equation} \label{bubslide}
    \begin{tikzpicture}[anchorbase]
        \draw[->] (0,-0.5) to (0,0.5);
        \bubleft{0.6,0}{a}{r};
    \end{tikzpicture}
    =
    \begin{tikzpicture}[anchorbase]
        \draw[->] (0,-0.5) to (0,0.5);
        \bubleft{-0.6,0}{a}{r};
    \end{tikzpicture}
    \ - \sum_{s,t \ge 0}
    \begin{tikzpicture}[anchorbase]
        \draw[->] (0,-0.5) -- (0,0.5);
        \draw[->] (-0.5,0.2) arc(90:450:0.2);
        \multdot{-0.7,0}{east}{r-s-t-2};
        \teleport{-0.3,0}{0,0};
        \multdot{0,-0.25}{west}{s+t};
        \token{0,0.25}{west}{a^\dagger};
    \end{tikzpicture}
    \ ,
    \begin{tikzpicture}[anchorbase]
        \draw[->] (0,-0.5) to (0,0.5);
        \bubright{-0.6,0}{a}{r};
    \end{tikzpicture}
    =
    \begin{tikzpicture}[anchorbase]
        \draw[->] (0,-0.5) to (0,0.5);
        \bubright{0.6,0}{a}{r};
    \end{tikzpicture}
    \ - \sum_{s,t \ge 0}
    \begin{tikzpicture}[anchorbase]
        \draw[->] (0,-0.5) -- (0,0.5);
        \draw[->] (0.5,0.2) arc(90:-270:0.2);
        \multdot{0.7,0}{west}{r-s-t-2};
        \teleport{0.3,0}{0,0};
        \multdot{0,-0.25}{east}{s+t};
        \token{0,0.25}{east}{a^\dagger};
    \end{tikzpicture}
    \ .
\end{equation}
The braid relation
\begin{equation}
    \begin{tikzpicture}[centerzero]
        \draw (0.3,-0.4) -- (-0.3,0.4);
        \draw (0,-0.4) \braidup (-0.3,0) \braidup (0,0.4);
        \draw (-0.3,-0.4) -- (0.3,0.4);
    \end{tikzpicture}
    =
    \begin{tikzpicture}[centerzero]
        \draw (0.3,-0.4) -- (-0.3,0.4);
        \draw (0,-0.4) \braidup (0.3,0) \braidup (0,0.4);
        \draw (-0.3,-0.4) -- (0.3,0.4);
    \end{tikzpicture}
\end{equation}
also holds for all orientations of the strands.

We next recall the basis theorem for $\AOB(A)$.  Recall that the \emph{cocenter} $C(A)$ of $A$ is the quotient of $A$ by the subspace spanned by $ab - (-1)^{\bar{a}\bar{b}} ba$ for all homogeneous $a,b \in A$.  For $a \in A$, we let $\cocenter{a}$ denote its canonical image in $C(A)$.

We define $\Sym(A)$ to be the symmetric superalgebra generated by the vector superspace $C(A)[x]$, where $x$ here is an even indeterminate.  For $n \in \Z$ and $a \in A$, let $e_n(a) \in \Sym(A)$ denote
\begin{equation}
    e_n(a) :=
    \begin{cases}
        0 & \text{if $n < 0$}, \\
        \tr(a) & \text{if $n=0$}, \\
        \cocenter{a} x^{n-1} & \text{if $n > 0$}.
    \end{cases}
\end{equation}
This defines a parity-preserving linear map $e_n \colon A \rightarrow \Sym(A)$.  By \cite[Lem.~7.1]{BSW-foundations}, for each $n \in \Z$ there is a unique parity-preserving linear map $h_n \colon A \rightarrow \Sym(A)$ such that
\begin{equation} \label{sake}
    \sum_{r+s=n} \sum_{c \in \B_A} (-1)^r e_r(ac) h_s(c^\vee b) = \delta_{n,0} \tr(ab),
    \qquad \text{for all }a,b \in A.
\end{equation}
In the special case that $A = \kk$, $\Sym(A)$ may be identified with the algebra of symmetric functions so that $e_n(1)$ corresponds to the $n$-th elementary symmetric function and $h_n(1)$ corresponds to the $n$-th complete symmetric function.

It follows from \cref{sake,infgrass} that we have an homomorphism of superalgebras
\begin{equation} \label{beta}
    \beta \colon \Sym(A) \to \End_{\AOB(A)}(\one),\quad
    e_n(a) \mapsto (-1)^{n-1} \ccbubble{a}{n-1},\quad
    h_n(a) \mapsto \cbubble{a}{n-1},\ n \ge 1.
\end{equation}
Let $X = X_n \otimes \dotsb \otimes X_1$ and $Y = Y_m \otimes \dotsb \otimes Y_1$ be objects of $\AOB(A)$ for $X_i, Y_j \in \{\uparrow, \downarrow\}$.  An \emph{$(X,Y)$-matching} is a bijection between the sets
\[
    \{i : X_i = \uparrow\} \sqcup \{j : Y_j = \downarrow\}
    \quad \text{and} \quad
    \{i : X_i = \downarrow\} \sqcup \{j : Y_j = \uparrow\}.
\]
A \emph{reduced lift} of an $(X,Y)$-matching is a string diagram representing a morphism $X \to Y$ such that
\begin{itemize}
    \item the endpoints of each string are points which correspond under the given matching;
    \item there are no floating bubbles and no dots or tokens on any string;
    \item there are no self-intersections of strings and no two strings cross each other more than once.
\end{itemize}
For each $(X,Y)$ matching, fix a set $D(X,Y)$ consisting of a choice of reduced lift for each $(X,Y)$-matching.  Then let $D_\circ(X,Y)$ denote the set of all morphisms that can be obtained from the elements of $D(X,Y)$ by adding a nonnegative number of dots and one element of $\B_A$ near to the terminus of each string (i.e.\ such that there are no crossings between the terminus and the dots and elements of $\B_A$).

Using the homomorphism $\beta$ from \cref{beta}, we have that, for $X,Y \in \AOB(A)$, $\Hom_{\AOB(A)}(X,Y)$ is a right $\Sym(A)$-supermodule under the action
\[
    \phi \theta := \phi \otimes \beta(\theta),\quad
    \phi \in \Hom_{\AOB(A)}(X,Y),\ \theta \in \Sym(A).
\]

\begin{theo}[{\cite[Th.~7.2]{BSW-foundations}}] \label{basis}
    For $X,Y \in \AOB(A)$, the morphism space $\Hom_{\AOB(A)}(X,Y)$ is a free right $\Sym(A)$-supermodule with basis $D_\circ(X,Y)$.
\end{theo}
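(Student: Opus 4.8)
The statement is quoted verbatim from \cite[Th.~7.2]{BSW-foundations}, so the plan is not to reprove it from scratch but to explain why that result applies in the present setting. The one substantive point is to identify $\AOB(A)$ with the central charge zero Frobenius Heisenberg category $\Heis_0(A)$ of \cite{Sav19,BSW-foundations}; once this is done, the basis theorem transfers directly.

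First I would compare presentations. The category $\Heis_k(A)$ is defined by generators --- a crossing, cups and caps (or equivalently a left crossing), tokens, and a dot --- subject to relations depending on the central charge $k \in \Z$; setting $k = 0$, one checks term by term that these reduce to \cref{toklin,wreath,inversion,rightadj,dotslide}, and conversely. This is mainly a matter of unwinding the teleporter shorthand \cref{teleporter}, which abbreviates $\sum_{b \in \B_A} b \otimes b^\vee$, and of checking that the sign conventions used here --- for tokens on downward strands, for the super interchange law \cref{interchange}, and for bubbles carrying a negative number of dots --- agree with those of \cite{BSW-foundations}. Since the presentations in \cite{Sav19} and \cite{BSW-foundations} are already known to coincide, matching either one suffices.

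Next I would match the auxiliary data appearing in the statement. One verifies that the superalgebra $\Sym(A)$ and the linear maps $e_n, h_n$ of \cref{sake} are exactly those of \cite[Lem.~7.1]{BSW-foundations}, that the homomorphism $\beta$ of \cref{beta} is the one used there (this uses the infinite Grassmannian relation \cref{infgrass} and the bubble slides \cref{bubslide}, which are the $k=0$ specializations of the corresponding relations in \cite{BSW-foundations}), and that the combinatorial data --- $(X,Y)$-matchings, reduced lifts, and the set $D_\circ(X,Y)$ obtained by decorating a fixed family of reduced lifts with dots and basis elements of $A$ near the termini of strings --- matches the spanning set used in the cited theorem. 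With these identifications in place, \cite[Th.~7.2]{BSW-foundations} asserts precisely that $\Hom_{\AOB(A)}(X,Y)$ is free as a right $\Sym(A)$-supermodule on $D_\circ(X,Y)$, so it remains only to quote it.

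The only real obstacle is bookkeeping: carefully tracking the parity signs --- those from \cref{interchange} and those produced by sliding tokens past dots and crossings --- through the dictionary between the two presentations, and confirming that the normalizations of bubbles and of $\beta$ are consistent on both sides. There is no new mathematical content beyond this translation; in particular both the spanning property and the linear independence come for free from \cite{BSW-foundations} once the isomorphism $\AOB(A) \cong \Heis_0(A)$ is established.
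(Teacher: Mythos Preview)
Your proposal is correct and in fact goes beyond what the paper does: the paper gives no proof at all, simply citing \cite[Th.~7.2]{BSW-foundations} in the theorem header, since $\AOB(A)$ is by definition the $k=0$ case of $\Heis_k(A)$. Your careful explanation of why the citation applies is appropriate due diligence, but there is nothing to compare against on the paper's side.
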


It follows from \cref{basis} that the map \cref{beta} is an isomorphism of superalgebras.  By the $n=1$ case of \cref{infgrass}, we see that
\begin{equation} \label{soccer}
    \begin{tikzpicture}[centerzero]
        \bubrightblank{0,0};
        \token{-0.2,0}{east}{a};
    \end{tikzpicture}
    =
    \begin{tikzpicture}[centerzero]
        \bubleftblank{0,0};
        \token{0.2,0}{west}{a};
    \end{tikzpicture}
    \ ,\quad a \in A.
\end{equation}
Furthermore, it follows from \cref{bubslide} that these bubbles are strictly central:
\begin{equation}
    \begin{tikzpicture}[centerzero]
        \bubrightblank{0,0};
        \token{-0.2,0}{east}{a};
        \draw[->] (0.4,-0.3) -- (0.4,0.3);
    \end{tikzpicture}
    =
    \begin{tikzpicture}[centerzero]
        \bubrightblank{0,0};
        \token{0.2,0}{west}{a};
        \draw[->] (-0.4,-0.3) -- (-0.4,0.3);
    \end{tikzpicture}
    \ ,\quad a \in A.
\end{equation}
For any linear map $\theta \colon C(A) \to \kk$, we can define the \emph{specialized oriented Frobenius Brauer category} $\OB(A,\theta)$ by imposing on $\OB(A)$ the additional relation
\begin{equation} \label{pop}
    \begin{tikzpicture}[centerzero]
        \bubrightblank{0,0};
        \token{-0.2,0}{east}{a};
    \end{tikzpicture}
    =
    \theta(a),\quad a \in A.
\end{equation}
Similarly, we can define the \emph{specialized affine oriented Frobenius Brauer category} $\AOB(A,\theta)$ by imposing on $\AOB(A)$ the relation \cref{pop}.  We will see in \cref{soap} that, under the categorical action to be defined in \cref{sec:action}, the bubbles \cref{soccer} act by multiplication by the supertrace of the map $V_+ \to V_+$, $v \mapsto av$.  Hence these actions factor through the corresponding specialized categories.

\section{Categorical action\label{sec:action}}

We are now ready to define the action of the oriented Frobenius Brauer category and the affine oriented Frobenius Brauer category on the category of supermodules for $\fg = \gl_{m|n}(A)$.

\begin{theo} \label{tiger}
    We have a monoidal superfunctor $\psi \colon \OB(A) \to \smd\fg$ given on objects by $\uparrow\ \mapsto V_+$, $\downarrow\ \mapsto V_-$ and on morphisms by
    \begin{align*}
        \psi(\upcross) &\colon V_+ \otimes V_+ \to V_+ \otimes V_+,& v \otimes w &\mapsto (-1)^{\bar{v} \bar{w}} w \otimes v,
        \\
        \psi(\uptok) &\colon V_+ \mapsto V_+,& v &\mapsto av,
        \\
        \psi(\rightcup) &\colon \kk \mapsto V_- \otimes V_+,& 1 &\mapsto \sum_{v \in \B_+} (-1)^{\bar{v}} v^\vee \otimes v,
        \\
        \psi(\leftcup) &\colon \kk \mapsto V_+ \otimes V_-,& 1 &\mapsto \sum_{v \in \B_+} v \otimes v^\vee,
        \\
        \psi(\rightcap) &\colon V_+ \otimes V_- \to \kk,& v \otimes w &\mapsto (-1)^{\bar{v}\bar{w}} B(w \otimes v),
        \\
        \psi(\leftcap) &\colon V_- \otimes V_+ \to \kk,& v \otimes w &\mapsto B(v \otimes w).
    \end{align*}
\end{theo}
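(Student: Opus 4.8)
The plan is to exploit the presentation of $\OB(A)$ by generators and relations: it suffices to check that the proposed images of the generating morphisms are genuine morphisms of $\smd\g$ with the prescribed parities, and that the images of the defining relations \cref{toklin}, \cref{wreath}, \cref{inversion}, \cref{rightadj} hold in $\smd\g$. Given this, the universal property of $\OB(A)$ as a strict monoidal supercategory produces a unique strict monoidal superfunctor $\psi$ taking the stated values, so ``monoidal'' and ``super'' are automatic. Note that the left and right crossings appearing in \cref{wreath} and \cref{inversion} are not generators but abbreviations defined by \cref{windmill}, so their images under $\psi$ are already determined; I would compute $\psi(\rightcross)$ and $\psi(\leftcross)$ first, expressing them via the super-flip, $B$, and the dual bases $\B_+,\B_-$.

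Next I would check well-definedness. The space $V_+\otimes V_+$ carries the tensor-product $\g$-action, and $\psi(\upcross)$ is the restriction to $\smd\g$ of the symmetric braiding of $\SVec$, hence a $\g$-module map by naturality of the braiding; it is even. The map $\psi(\uptok)$ is left multiplication by $a$ on $A^{m|n}$, which commutes strictly (no sign) with right matrix multiplication, hence is a $\g$-module map, of parity $\bar a$. The map $\psi(\leftcap)=B$ is a $\g$-module map (noted just after the definition of $B$), and $\psi(\rightcap)$ is $B$ precomposed with the braiding, hence also one; both are even. Since $A$ is finite-dimensional, $V_\pm$ are finite-dimensional superspaces and $B\colon V_-\otimes V_+\to\kk$ is a perfect pairing with mutually dual bases $\B_-,\B_+$; therefore the coevaluations $\kk\to V_-\otimes V_+$ and $\kk\to V_+\otimes V_-$ adjoint to $B$ (respectively to $B$ twisted by the braiding) are automatically $\g$-module maps. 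These coevaluations are precisely $\psi(\rightcup)$ and $\psi(\leftcup)$, the factors $(-1)^{\bar v}$ in $\psi(\rightcup)$ being exactly the Koszul signs introduced by the braiding, so both are well-defined and even.

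Then I would verify the relations family by family. Relation \cref{toklin} is immediate from the unital associative $A$-module structure on $V_+$. The three equations of \cref{wreath} are formal consequences of $\psi(\upcross)$ being the symmetric braiding: the first is $\psi(\upcross)^{\circ 2}=\id$ by symmetry, the three-strand relation is an instance of the hexagon/naturality axioms, and the token-sliding relation is naturality of the braiding — crucially, that relation carries no explicit sign, and on the module side this is matched because the super-interchange sign is already built into the tensor product $f\otimes g$ of morphisms. The remaining relations \cref{inversion} and \cref{rightadj} are the zig-zag (adjunction) relations and the ``curl'' relation; after substituting \cref{windmill} and the explicit formulas for $\psi$ on cups and caps, each of them collapses to the dual-basis identities such as $\sum_{v\in\B_+}B(v^\vee,w)\,v=w$ and the corresponding contraction on $V_-$, i.e.\ to $\B_+$ and $\B_-$ being dual with respect to $B$.

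I expect the only genuine difficulty to be sign bookkeeping. Several conventions interact: the Koszul sign $(-1)^{\bar v\bar w}$ in $\psi(\upcross)$ and $\psi(\rightcap)$; the sign in $B(v\otimes w)=(-1)^{\bar v\bar w}\tr(wv)$; the sign $(-1)^{\bar v}$ in $\psi(\rightcup)$; the sign $(-1)^{\bar b}$ in $(b^\vee)^\vee=(-1)^{\bar b}b$ from \cref{doubledual}; and the super-interchange signs in composites and tensor products. I would manage this by first checking the two genuine duality relations \cref{rightadj}, then the curl relation (the third equation of \cref{inversion}), and finally the two mixed double-crossing relations (the first two of \cref{inversion}); the cyclicity \cref{yield} of $\tr$ and the formula \cref{doubledual} are the algebraic inputs that make the signs cancel, and \cref{kitkat} is the packaged form in which essentially the same identities will be reused when proving the companion functor of \cref{lion}. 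Everything else is a routine check.
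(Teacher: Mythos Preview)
Your proposal is correct and follows essentially the same approach as the paper: compute $\psi(\rightcross)$, $\psi(\leftcross)$ (and $\psi(\downcross)$) from \cref{windmill} to see they are all the super-flip, then verify the defining relations \cref{toklin,wreath,inversion,rightadj} by direct computation using the dual-basis identities. The paper's proof is briefer---it checks one curl relation explicitly and leaves the rest as routine---whereas you have spelled out more of the structure, including the check that each image is a $\g$-module map, but the strategy is the same.
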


\begin{proof}
    We must show that $\psi$ respects the relations \cref{toklin,wreath,inversion,rightadj}.  We first note that, using the definitions \cref{windmill,stake}, the maps
    \[
        \psi(\rightcross) \colon V_+ \otimes V_- \to V_- \otimes V_+,\quad
        \psi(\leftcross) \colon V_- \otimes V_+ \to V_+ \otimes V_-,\quad
        \psi(\downcross) \colon V_- \otimes V_- \to V_- \otimes V_-
    \]
    are all given by $v \otimes w \mapsto (-1)^{\bar{v}\bar{w}} w \otimes v$.  Thus we have
    \[
        \psi
        \left(
            \begin{tikzpicture}[centerzero]
                \draw[->] (0,-0.4) to[out=up,in=0] (-0.25,0.15) to[out=180,in=up] (-0.4,0) to[out=down,in=180] (-0.25,-0.15) to[out=0,in=down] (0,0.4);
            \end{tikzpicture}
        \right)
        \colon v \mapsto \sum_{w \in \B_+} (-1)^{\bar{w}} w^\vee \otimes w \otimes v
        \mapsto \sum_{w \in \B_-} (-1)^{\bar{w} + \bar{v}\bar{w}} w^\vee \otimes v \otimes w
        \mapsto v,
    \]
    verifying the third equality in \cref{inversion}.  The remaining relations are similarly verified by direct computation.
\end{proof}

Note that, for $a \in A$,
\begin{equation} \label{soap}
    \psi
    \left(
        \begin{tikzpicture}[centerzero]
            \bubrightblank{0,0};
            \token{-0.2,0}{east}{a};
        \end{tikzpicture}
    \right)
    =
    \psi
    \left(
        \begin{tikzpicture}[centerzero]
            \bubleftblank{0,0};
            \token{0.2,0}{west}{a};
        \end{tikzpicture}
    \right)
    = \sum_{b \in \B_+} (-1)^{\bar{v}} B(v^\vee,av) 1_\one
\end{equation}
is multiplication by the supertrace of the map $V_+ \to V_+$, $v \mapsto av$.  In particular,
\begin{equation}
    \psi
    \left(
        \begin{tikzpicture}[centerzero]
            \bubrightblank{0,0};
        \end{tikzpicture}
    \right)
    =
    \psi
    \left(
        \begin{tikzpicture}[centerzero]
            \bubleftblank{0,0};
        \end{tikzpicture}
    \right)
    = \sdim(V_+) 1_\one,
\end{equation}
where $\sdim(V_+) = m \dim(A_{\bar{0}}) + n \dim(A_{\bar{1}}) - m \dim(A_{\bar{1}}) - n \dim(A_{\bar{0}})$ is the super dimension of $V_+$.

For a supercategory $\cC$, let $\cEnd_\kk(\cC)$ denote the strict monoidal supercategory of superfunctors and supernatural transformations.  An \emph{action} of a monoidal supercategory $\cD$ on a supercategory $\cC$ is a monoidal superfunctor $\cD \to \cEnd_\kk(\cC)$.  It follows immediately from \cref{tiger} that $\OB(A)$ acts on $\smd\fg$ by
\[
    X \mapsto \psi(X) \otimes -,\quad
    f \mapsto \psi(f) \otimes -
\]
for objects $X$ in $\OB(A)$ and morphisms $f$ in $\OB(A)$.  The following result extends this action to $\AOB(A)$.

\begin{theo} \label{lion}
    We have a monoidal superfunctor $\Psi \colon \AOB(A) \to \cEnd_\kk(\smd\fg)$ given on objects by $\uparrow\ \mapsto V_+ \otimes -$, $\downarrow\ \mapsto V_- \otimes -$ and on morphisms by
    \[
        \Psi(f) = \psi(f) \otimes -,\quad
        f \in \{ \upcross, \uptok, \rightcup, \leftcup, \rightcap, \leftcap : a \in A \},
    \]
    and $\Psi(\updot) \colon V_+ \otimes - \to V_+ \otimes -$ is the supernatural transformation with components
    \[
        \Psi(\updot)_W \colon V_+ \otimes W \to V_+ \otimes W,\quad
        v \otimes w \mapsto(v \otimes w)\Omega,
    \]
    for $W \in \smd\fg$, where $\Omega$ is the element defined in \cref{Omegadef}.
\end{theo}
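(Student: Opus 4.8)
The plan is to use the presentation of $\AOB(A)$ as the strict monoidal supercategory obtained from $\OB(A)$ by adjoining the even generator $\updot$ subject to the two relations in \cref{dotslide}. So it suffices to (i) exhibit a well-defined even supernatural transformation $\Psi(\updot)$ of the endofunctor $V_+\otimes-$ of $\smd\g$, and (ii) verify the two relations of \cref{dotslide}; the relations of $\OB(A)$ are then automatically respected, since for an $\OB(A)$-generator $f$ the assignment $\Psi(f)=\psi(f)\otimes-$ is the composite of the monoidal superfunctor $\psi$ of \cref{tiger} with the monoidal superfunctor $\smd\g\to\cEnd_\kk(\smd\g)$, $Z\mapsto Z\otimes-$. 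Granting (i) and (ii), $\Psi$ extends uniquely to the asserted monoidal superfunctor.

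For (i): the tensor $\Omega=\sum_{M\in\B_\g}M\otimes M^\vee$ is even, since each summand $M\otimes M^\vee$ has parity $\bar M+\overline{M^\vee}=\even$. The bilinear form $(X,Y)\mapsto\tr_{m|n}(XY)$ on $\g$ is non-degenerate (as $\Mat_{m|n}(A)$ is Frobenius with trace $\tr_{m|n}$), and it is invariant and supersymmetric by the trace property of $\tr_{m|n}$; hence $\Omega$ is its split Casimir and is $\g$-invariant, meaning that $[\Delta(X),\Omega]=0$ for all $X\in\g$, so $\Omega$ commutes with the diagonal $\g$-action on a twofold tensor product of $\g$-supermodules. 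Letting the first leg of $\Omega$ act on $V_+$ and the second leg on $W$, it follows that $v\otimes w\mapsto(v\otimes w)\Omega$ is a morphism in $\smd\g$, so the component $\Psi(\updot)_W$ is well defined; naturality in $W$ is immediate since any morphism $W\to W'$ of $\g$-supermodules commutes with the action of the second leg of $\Omega$.

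For (ii): the second relation of \cref{dotslide} (a dot slides past a token $a$) reduces, on the component at $W$, to the commutativity of left multiplication by $a$ on the $V_+$ factor with the right $\g$-action that enters $\Psi(\updot)_W$, which is an instance of the associativity of $A$ (and no interchange signs arise, the dot being even). The first relation (a dot slides through a crossing, up to a teleporter) is the crux. On the component at $W$, all three morphisms are endomorphisms of $V_+\otimes V_+\otimes W$; whiskering shows that a dot on the first tensor factor acts as $\Omega_{12}+\Omega_{13}$ (the first leg of $\Omega$ on the first $V_+$, its second leg distributed over the remaining two slots via the coproduct), a dot on the second tensor factor acts as $\Omega_{23}$, and the crossing acts as the symmetric braiding $c_{12}$. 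A routine sign computation, using that $c_{12}$ is the braiding, gives $\Omega_{13}\circ c_{12}=c_{12}\circ\Omega_{23}$, so those terms cancel in the difference of the two dot placements, which collapses to $\Omega_{12}\circ c_{12}$. Finally, $\Omega_{12}\circ c_{12}$ is exactly the action of the teleporter, which acts on the two $V_+$ factors by $\tau=\sum_{b\in\B_A}b\otimes b^\vee$ via left multiplication and trivially on $W$: this identity is precisely \cref{kitkat}, $\tau(u\otimes v)=(-1)^{\bar u\bar v}(v\otimes u)\Omega$, proved in the earlier lemma and flagged there as the crucial computational input.

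I expect the only genuinely delicate point to be keeping careful track of the Koszul signs throughout the first relation of \cref{dotslide}; the reduction to the already-established identity \cref{kitkat} is exactly what keeps that bookkeeping under control.
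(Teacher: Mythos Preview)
Your proposal is correct and follows essentially the same approach as the paper: reduce to \cref{tiger} for the $\OB(A)$ relations, check that $\Psi(\updot)$ is an even supernatural transformation, and verify the two relations in \cref{dotslide}, with the crucial first relation collapsing to \cref{kitkat}. The only difference is packaging: the paper carries out the computation for the first relation of \cref{dotslide} explicitly on elements $u\otimes v\otimes w$, while you organize the same computation via the leg notation $\Omega_{12},\Omega_{13},\Omega_{23}$ and the identity $\Omega_{13}\circ c_{12}=c_{12}\circ\Omega_{23}$, and the paper simply declares the supernaturality check ``straightforward'' rather than invoking the $\g$-invariance of the split Casimir as you do.
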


\begin{proof}
    In light of \cref{tiger}, it suffices to check that $\Psi(\updot)$ is a supernatural transformation, which is straightforward, and that $\Psi$ respects the relations \cref{dotslide}.  To verify the first relation in \cref{dotslide}, we compute that
    \[
        \Psi
        \left(
            \begin{tikzpicture}[centerzero]
                \draw[->] (-0.20,-0.20) -- (0.20,0.20);
                \draw[->] (0.20,-0.20) -- (-0.20,0.20);
                \singdot{-0.08,0.08};
            \end{tikzpicture}
        \right)_W
        \colon V_+ \otimes V_+ \otimes W \to V_+ \otimes V_+ \otimes W
    \]
    is the map given by
    \begin{align*}
        u \otimes v \otimes w
        &\mapsto (-1)^{\bar{u}\bar{v}} v \otimes u \otimes w
        \\
        &\mapsto \sum_{M \in \B_\fg} (-1)^{\bar{u}\bar{v}} \left( (-1)^{\bar{u}\bar{M}} vM \otimes u M^\vee \otimes w + (-1)^{(\bar{u}+\bar{w}) \bar{M}} vM \otimes u \otimes w M^\vee \right).
    \end{align*}
    Similarly,
    \[
        \Psi
        \left(
            \begin{tikzpicture}[centerzero]
                \draw[->] (-0.20,-0.20) -- (0.20,0.20);
                \draw[->] (0.20,-0.20) -- (-0.20,0.20);
                \singdot{0.09,-0.09};
            \end{tikzpicture}
        \right)_W
        \colon V_+ \otimes V_+ \otimes W \to V_+ \otimes V_+ \otimes W
    \]
    is the map given by
    \[
        u \otimes v \otimes w
        \mapsto \sum_{M \in \B_\fg} (-1)^{\bar{w}\bar{M}} u \otimes vM \otimes wM^\vee
        \mapsto \sum_{M \in \B_\fg} (-1)^{\bar{u}(\bar{v} + \bar{M})+\bar{w}\bar{M}} vM \otimes u \otimes wM^\vee.
    \]
    Thus,
    \[
        \Psi
        \left(
            \begin{tikzpicture}[centerzero]
                \draw[->] (-0.20,-0.20) -- (0.20,0.20);
                \draw[->] (0.20,-0.20) -- (-0.20,0.20);
                \singdot{-0.08,0.08};
            \end{tikzpicture}
            -
            \begin{tikzpicture}[centerzero]
                \draw[->] (-0.20,-0.20) -- (0.20,0.20);
                \draw[->] (0.20,-0.20) -- (-0.20,0.20);
                \singdot{0.09,-0.09};
            \end{tikzpicture}
        \right)_W
        (u \otimes v \otimes w)
        = (-1)^{\bar{u}\bar{v}} (v \otimes u)\Omega \otimes w
        \overset{\cref{kitkat}}{=} \Psi
        \left(
            \begin{tikzpicture}[centerzero]
                \draw[->] (-0.2,-0.3) -- (-0.2,0.3);
                \draw[->] (0.2,-0.3) -- (0.2,0.3);
                \teleport{-0.2,0}{0.2,0};
            \end{tikzpicture}
        \right)
        (u \otimes v \otimes w).
    \]

    To verify the second relation in \cref{dotslide} we compute that, for $a \in A$, we have
    \[
        \Psi
        \left(
            \begin{tikzpicture}[centerzero]
                \draw[->] (0,-0.3) -- (0,0.3);
                \token{0,0.1}{east}{a};
                \singdot{0,-0.13};
            \end{tikzpicture}
        \right)_W
        (v \otimes w)
        = \sum_{M \in \B_\fg} (-1)^{\bar{w}\bar{M}} avM \otimes wM^\vee
        =
        \Psi
        \left(
            \begin{tikzpicture}[centerzero]
                \draw[->] (0,-0.3) -- (0,0.3);
                \token{0,-0.13}{east}{a};
                \singdot{0,0.1};
            \end{tikzpicture}
        \right)_W
        (v \otimes w). \qedhere
    \]
\end{proof}

As explained in the introduction, when $A=\kk$, \cref{tiger,lion} recover known results.  Furthermore, as noted in \cref{glass}, the definitions of $\OB(A)$ and $\AOB(A)$ can be generalized to allow $A$ to be the two-dimensional Clifford superalgebra.  In this case, the actions described in \cref{tiger,lion} correspond to those described in \cite[\S4.2 and Th.~4.4]{BCK19} on supermodules for the queer Lie superalgebra (see \cref{mirror}).

The center $Z(\cEnd(\smd\fg)) := \End_{\cEnd(\smd\fg)}(\one)$ of the category $\cEnd(\smd\fg)$ can be naturally identified with $Z(U(\fg))$ via the map
\begin{equation} \label{mango}
    \rho \colon Z(U(\fg)) \xrightarrow{\cong} Z(\cEnd(\smd\fg)),\quad
    u \mapsto \rho_u,
\end{equation}
where $\rho_u$ is the supernatural transformation whose $W$-component for $W \in \smd\fg$ is
\[
    (\rho_u)_W \colon W \to W,\quad w \mapsto (-1)^{\bar{u}\bar{w}} wu.
\]
Then it follows from \cref{lion,beta} that we have a homomorphism of superalgebras
\[
    \rho^{-1} \circ \Psi \circ \beta \colon \Sym(A) \to Z(U(\fg)).
\]
The following proposition describes this map explicitly.

\begin{prop} \label{shark}
    The element
    \[
        \rho^{-1} \circ \Psi \left( \ccbubble{a}{r} \right)
        = (-1)^r \rho^{-1} \circ \Psi \circ \beta (e_{r+1}(a))
        \in Z(U(\fg))
    \]
    is given by
    \[
        \sum_{\substack{1 \le i_1,\dotsc,i_r \le d \\ b_1,\dotsc,b_r \in \B_A}} (-1)^{a \bar{b}_r + \sum_{k=1}^r \bar{b}_k \bar{b}_{k+1}} (b_2 b_1)_{i_2,i_1} (b_3 b_2^\vee)_{i_3,i_2} \dotsm (b_r b_{r-1}^\vee)_{i_r,i_{r-1}} (b_{r+1}^\vee a b_r^\vee)_{i_{r+1},i_r},
    \]
    where we adopt the convention that $i_{r+1}=i_1$ and $b_{r+1}=b_1$.
\end{prop}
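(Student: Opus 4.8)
The plan is to realize the bubble $\ccbubble{a}{r}$ as a composite of generating morphisms, push it through $\Psi$, and compute the resulting supernatural endomorphism of $\mathrm{id}_{\smd\g}$ componentwise. Using the rigid and pivotal structure of $\AOB(A)$ together with the relations sliding tokens and dots around cups and caps (cf.\ \cref{soccer}), one has
\[
    \ccbubble{a}{r} \;=\; \leftcap \circ \bigl( 1_\downarrow \otimes \bigl( (\updot)^{\circ r} \circ \uptok \bigr) \bigr) \circ \rightcup ,
\]
a single $\uparrow$-strand carrying a token $a$ and $r$ dots, closed up into the counterclockwise loop $\ccbubble{a}{r}$. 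Since $\Psi$ is a monoidal superfunctor, $\Psi(\ccbubble{a}{r})$ is the corresponding composite of the explicit images from \cref{tiger,lion}. As this morphism lies in the image of the algebra homomorphism $\beta$ of \cref{beta}, it is central in $\cEnd_\kk(\smd\g)$ and hence equals $\rho_u$ for a unique $u \in Z(U(\g))$ under the identification \cref{mango}; the task is to identify $u$ with the stated expression.

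I would then compute the $W$-component ($W \in \smd\g$) by composing bottom to top. The cup $\Psi(\rightcup)_W$ sends $w \mapsto \sum_{v \in \B_+} (-1)^{\bar v} v^\vee \otimes v \otimes w \in V_- \otimes V_+ \otimes W$; the $V_-$-leg then stays inert, since the token and dots are of the form $1_\downarrow \otimes (\,\cdot\,)$ and so act only on $V_+ \otimes W$. The token replaces $v$ by $av$, and each dot multiplies the $V_+ \otimes W$ part by $\Omega = \sum_{M \in \B_\g} M \otimes M^\vee$, i.e.\ sends $x \otimes y \mapsto \sum_M (-1)^{\bar y \bar M} xM \otimes yM^\vee$. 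A short induction on the number of dots shows that after all $r$ dots the $V_+$-leg is $avM_1 M_2 \cdots M_r$ and the $W$-leg is $w M_1^\vee M_2^\vee \cdots M_r^\vee$, weighted by the Koszul sign accumulated along the iterated action. Finally $\Psi(\leftcap)$ contracts the $V_-$ and $V_+$ legs through $B$, converting $B(v^\vee \otimes avM_1\cdots M_r)$ into a trace over $A$, with an extra sign from the definition of $B$ and the cyclicity relation \cref{yield}.

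It then remains to unpack the matrix-unit notation and resum. Writing $v = b_{i,+}$ and $M_k = b^{(k)}_{(i_k,j_k)}$, so that $M_k^\vee = (-1)^{p(j_k)} (b^{(k)})^\vee_{(j_k,i_k)}$, the row-vector multiplications force $i = i_1$, $j_1 = i_2, \dots, j_{r-1} = i_r$, and the cap forces $j_r = i$; putting $i_{r+1} := j_r = i_1$ recovers the cyclic index convention of the statement, the contraction contributes $\tr\bigl( ab\,b^{(1)} \cdots b^{(r)}\, b^\vee \bigr)$, and the $W$-leg becomes $w \cdot \prod_{k=1}^r (-1)^{p(i_{k+1})} (b^{(k)})^\vee_{(i_{k+1},i_k)}$. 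The last step is to perform the sum over $b \in \B_A$ appearing through this trace, pushing it along the chain by repeated use of \cref{adecomp} — and of \cref{beam}, which is tailored to exactly this kind of manipulation — so as to convert the factors $(b^{(1)})^\vee, \dots, (b^{(r)})^\vee$ into $b_2 b_1,\ b_3 b_2^\vee,\ \dots,\ b_r b_{r-1}^\vee,\ b_{r+1}^\vee a b_r^\vee$ after relabelling $b^{(k)}$ as $b_k$ (with $b_{r+1} = b_1$), and to collect every sign. This final bookkeeping is the main obstacle: the many independent sign contributions — the cup's $(-1)^{\bar v}$, the factors $(-1)^{p(i_{k+1})}$ from the expansions of $\Omega$, the Koszul signs of the iterated tensor action, the sign in $B$, the cyclicity sign of $\tr$, the identity $(b^\vee)^\vee = (-1)^{\bar b} b$ from \cref{doubledual} used during the resummation, and the twist $(-1)^{\bar u \bar w}$ that must be stripped off to read $u$ off from $(\rho_u)_W(w) = (-1)^{\bar u \bar w} wu$ — must be shown to amalgamate into the single factor $(-1)^{\bar a \bar b_r + \sum_{k=1}^r \bar b_k \bar b_{k+1}}$. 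Centrality of the resulting element needs no separate argument, being inherited from $\ccbubble{a}{r}$ lying in the image of $\beta$. As sanity checks, the case $r = 0$ must collapse to multiplication by $\str(v \mapsto av)$, in agreement with \cref{soap}, while specializing $A = \kk$ (resp.\ $A = \Cl$) must recover the classical higher Casimir / Gelfand invariants of $\gl_{m|n}$ (resp.\ their queer analogues from \cite{BCK19}).
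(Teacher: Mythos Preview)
Your proposal is correct and follows essentially the same approach as the paper: decompose the bubble as $\leftcap \circ (1_\downarrow \otimes ((\updot)^{\circ r} \circ \uptok)) \circ \rightcup$, apply $\Psi$ componentwise, expand $\Omega^r$ in matrix units, contract via the cap into a trace, and then use \cref{adecomp} and \cref{beam} to reorganize the product and signs into the stated form. The paper carries out exactly this computation explicitly, including the sign bookkeeping you flag as the main obstacle, so your outline matches step for step.
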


\begin{proof}
    For $W \in \smd\fg$, we compute that $\Psi \left( \ccbubble{a}{r} \right)_W$ is the map
    \begin{align*}
        &w
        \mapsto \sum_{v \in \B_+} (-1)^{\bar{v}} v^\vee \otimes v \otimes w
        \\
        &\mapsto \sum_{v \in \B_+} (-1)^{\bar{v}+\bar{a}\bar{v}} v^\vee \otimes (a v \otimes w)\Omega^r
        \\
        &= \sum_{\substack{c \in \B_A \\ 1 \le k \le d}} \sum_{\substack{1 \le i_1,\dotsc,i_r \le d \\ 1 \le j_1,\dotsc,j_r \le d \\ b_1,\dotsc,b_r \in \B_A}} (-1)^{\bar{a}\bar{c} + \bar{w} \sum_p \bar{b}_p + \sum_{p<q} \bar{b}_p \bar{b}_q} c_{k,-} \otimes ac^\vee_{k,+} (b_1)_{i_1,j_1} \dotsm (b_r)_{i_r,j_r} \otimes w (b_1^\vee)_{j_1,i_1} \dotsm (b_r^\vee)_{j_r,i_r}
        \\
        &= \sum_{\substack{1 \le i_1,\dotsc,i_r,j_r \le d \\ b_1,\dotsc,b_r,c \in \B_A}} (-1)^{\bar{a}\bar{c} + \bar{w} \sum_p \bar{b}_p + \sum_{p<q} \bar{b}_p \bar{b}_q} c_{i_1,-} \otimes (ac^\vee b_1 \dotsm b_r)_{j_r,+} \otimes w (b_1^\vee)_{i_2,i_1} \dotsm (b_{r-1}^\vee)_{i_r,i_{r-1}} (b_r^\vee)_{j_r,i_r}
        \\
        &\mapsto \sum_{\substack{1 \le i_1,\dotsc,i_r \le d \\ b_1,\dotsc,b_r,c \in \B_A}} (-1)^{\bar{a}\bar{c} + \bar{w} \sum_p \bar{b}_p + \sum_{p<q} \bar{b}_p \bar{b}_q} \tr(c ac^\vee b_1 \dotsm b_r) w (b_1^\vee)_{i_2,i_1} \dotsm (b_{r-1}^\vee)_{i_r,i_{r-1}} (b_r^\vee)_{i_1,i_r}
        \\
        &= \sum_{\substack{1 \le i_1,\dotsc,i_r \le d \\ b_1,\dotsc,b_{r-1},c \in \B_A}} (-1)^{\bar{a}\bar{c} + \bar{a} \bar{w} + \sum_{p < q < r} \bar{b}_p \bar{b}_q + \bar{a} \sum_{p=1}^{r-1} \bar{b}_p + \sum_{p=1}^{r-1} \bar{b}_p} w (b_1^\vee)_{i_2,i_1} \dotsm (b_{r-1}^\vee)_{i_r,i_{r-1}} (cac^\vee b_1 \dotsm b_{r-1})_{i_1,i_r}
        \\
        &= (-1)^{\bar{a}\bar{w}} w \sum_{\substack{1 \le i_1,\dotsc,i_r \le d \\ b_1,\dotsc,b_r \in \B_A}} (-1)^{\sum_{p < q < r} \bar{b}_p \bar{b}_q + \bar{a} \sum_p \bar{b}_p} (b_1)_{i_2,i_1} \dotsm (b_{r-1})_{i_r,i_{r-1}} (b_rab_r^\vee b_1^\vee \dotsm b_{r-1}^\vee)_{i_1,i_r}
        \\
        &\overset{\mathclap{\cref{beam}}}{=}\ (-1)^{\bar{a}\bar{w}} w \sum_{\substack{1 \le i_1,\dotsc,i_r \le d \\ b_1,\dotsc,b_r \in \B_A}} (-1)^{\sum_{k=1}^r \bar{b}_k \bar{b}_{k+1} + \bar{a} \bar{b}_{r-1}} (b_1 b_r)_{i_2,i_1} (b_2 b_1^\vee)_{i_3,i_2} \dotsm (b_{r-1} b_{r-2}^\vee)_{i_r,i_{r-1}} (b_r^\vee a b_{r-1}^\vee)_{i_1,i_r}.
    \end{align*}
    The result then follows by shifting the indices of the $b_i$ by $1$.
\end{proof}

When $A=\kk$, \cref{shark} recovers the elements described in \cite[Rem.~1.4]{BCNR17}.  For $A = \Cl$ (see \cref{glass}), these central elements were computed in \cite[Th.~4.5]{BCK19}.


\bibliographystyle{alphaurl}
\bibliography{AOFBC}

\end{document}